\documentclass[10pt,a4paper]{article}
\usepackage[a4paper]{geometry}
\usepackage{amssymb,latexsym,amsmath,amsfonts,amsthm}
\usepackage{graphicx,comment}
\usepackage{epsfig}
\usepackage{tikz}
\usepackage[T1]{fontenc}

\newcommand{\tr}{\mathrm{tr}}

\newtheorem{theorem}{Theorem}[section]
\newtheorem{lemma}[theorem]{Lemma}

\newtheorem{corollary}[theorem]{Corollary}

\theoremstyle{definition}
\newtheorem{definition}[theorem]{Definition}

\theoremstyle{remark}

\newtheorem{remark}[theorem]{Remark}

\numberwithin{equation}{section}

\hyphenation{pa-ra-me-tri-za-tion}

\title{Characteristic polynomials of random banded Hessenberg matrices and Hermite-Pad\'{e} approximation}

\author{A. L\'{o}pez-Garc\'{i}a \qquad V. A. Prokhorov}

\date{\today}

\begin{document}

\maketitle

\begin{abstract}
We consider a class of random banded Hessenberg matrices with independent entries having identical distributions along diagonals. The distributions may be different for entries belonging to different diagonals. For a sequence of $n\times n$ matrices in the class considered, we investigate the asymptotic behavior of their empirical spectral distribution as $n$ tends to infinity.\end{abstract}

\section{Introduction}
In a variety of important problems in analysis and probability, one considers a sequence of polynomials $\{p_{n}\}_{n=0}^{\infty}$, $\deg(p_{n})=n$, that satisfy a high-order difference equation of the form
\begin{equation}\label{intro:recrel}
z p_{n}=a_{n,n+1} p_{n+1}+a_{n,n} p_{n}+\cdots+a_{n,n-r} p_{n-r},\qquad n\geq r,
\end{equation}    
for some fixed $r\geq 1$, and one wishes to deduce an asymptotic property of these polynomials, for instance the limiting distribution of their zeros. Frequently, a useful approach for such problem is to identify these polynomials, or a normalization of them, as characteristic polynomials of a sequence of matrices, which in the case of \eqref{intro:recrel} will have the structure of banded Hessenberg matrices. 

There are several important families of polynomials that satisfy a relation of the form \eqref{intro:recrel}. Besides the classical example of orthogonal polynomials on the real line in the case $r=1$, we find classes of polynomials that satisfy orthogonality conditions with respect to a collection of $r\geq 2$ measures in the complex plane. These include the multiorthogonal polynomials associated with Angelesco and Nikishin systems of measures, supported on the real line \cite{Ang,Nik,NikSor,AptKalLopRoc} or on symmetric starlike sets in the complex plane \cite{AptKalSaff,DelLop,LeuVan1,LeuVan2,Lop,LopMin}. We also find more general classes of polynomials satisfying orthogonality conditions with respect to systems of abstract linear functionals.

An important property of Angelesco and Nikishin polynomials, equivalent to the multiorthogonality conditions, is that they appear as denominators of simultaneous Pad\'{e} approximants (commonly known as Hermite-Pad\'{e} approximants) for systems of analytic functions. Interestingly, it was proved by Kalyagin \cite{Kal} that polynomials satisfying \eqref{intro:recrel} can be realized as denominators of Hermite-Pad\'{e} approximants for a system of Weyl (resolvent) functions of the banded Hessenberg operator constructed from the coefficients in \eqref{intro:recrel}. This property is stated in precise terms in Theorem~\ref{theo:Kal}, and it is essential for our analysis. Numerous other works have investigated this Hermite-Pad\'{e} approximation property, see e.g. \cite{AptKal, AptKalVan, AptKalSaff, BarGerLop, BarLopMarTor, DelLopLop, RobSan, VanIseg}. 

We describe now the problem investigated in this work, concerning a class of \emph{random} banded Hessenberg operators. From now on $p\geq 1$ will denote a fixed but arbitrary positive integer. Let $\mu_{k}$, $0\leq k\leq p$, be a collection of $p+1$ Borel probability measures with compact support in the complex plane. For each $0\leq k\leq p$, let $a^{(k)}=(a_{n}^{(k)})_{n=1}^{\infty}$ be a sequence of complex i.i.d. random variables with distribution $\mu_{k}$. We also assume that the whole collection $\{a_{n}^{(k)}: n\geq 1,\,\,0\leq k\leq p\}$ is jointly independent. To simplify matters, in this work we will assume that the sequences of random variables considered are surely bounded in modulus by an absolute constant. With the $p+1$ sequences $(a_{n}^{(k)})_{n=1}^{\infty}$, $0\leq k\leq p$, we construct the infinite matrix $H=(h_{i,j})_{i,j=1}^{\infty}$ with entries
\begin{equation}\label{def:osH:intro}
\begin{cases}
h_{j-1,j}=1, & j\geq 2,\\
h_{j+k,j}=a_{j}^{(k)}, & 0\leq k\leq p, \quad j\geq 1,\\
h_{i,j}=0, & \mbox{otherwise}.
\end{cases}
\end{equation}
Note that this is a banded lower Hessenberg matrix. We denote by $H_{n}$ the principal $n\times n$ truncation of $H$, that is
\begin{equation}\label{def:Hn:intro}
H_{n}=\begin{pmatrix}
a_{1}^{(0)} & 1 & & & 0 \\
\vdots & \ddots & \ddots \\
a_{1}^{(p)} & & \ddots & \ddots & \\
 & \ddots & & \ddots & 1 \\
0 &  & a_{n-p}^{(p)} & \cdots & a_{n}^{(0)}
\end{pmatrix}.
\end{equation}
Let $\{\lambda_{i,n}\}_{i=1}^{n}$ denote the eigenvalues of $H_{n}$, counting multiplicities, and let
\begin{equation}\label{empmeas}
\sigma_{n}:=\frac{1}{n}\sum_{i=1}^{n}\delta_{\lambda_{i,n}}.
\end{equation}
Since we have uniform boundedness of the matrix entries, the eigenvalues $\lambda_{i,n}$ are also uniformly bounded. Clearly, $\sigma_{n}$ is a random probability measure. Its mean $\mathbb{E}\sigma_{n}$ is the probability measure defined via duality by
\[
\int f\,d\mathbb{E}\sigma_{n}=\mathbb{E} \int f\,d\sigma_{n}
\]
for every continuous function $f$.

We address in this work the following questions: Is the sequence of mean measures $\mathbb{E}\sigma_{n}$ weakly convergent? If so, what is the limit and how is it related to the distributions $\{\mu_{k}\}_{k=0}^{p}$? We provide a partial answer to these questions, proving the existence of the limits \eqref{asympformintro} and describing a generating function for these limits.

Let $Q_{n}(z):=\det(z I_{n}-H_{n})$ be the characteristic polynomial of the matrix $H_{n}$. Expanding the determinant along the last row, we see that the sequence $(Q_{n}(z))_{n=0}^{\infty}$ can be defined as the solution $(y_{n})_{n=0}^{\infty}$ of the difference equation 
\[
z y_{n}=y_{n+1}+a^{(0)}_{n+1}y_{n}+a^{(1)}_{n} y_{n-1}+\cdots+a^{(p)}_{n-p+1}y_{n-p},\qquad n\geq p,
\] 
with initial conditions $y_{k}(z)=\det(z I_{k}-H_{k})$, $0\leq k\leq p$.

We describe now our main result. Let $\{e_{n}\}_{n\in\mathbb{N}}$ denote the standard orthonormal basis in $\ell^{2}(\mathbb{N})$, and let $\mathcal{H}$ be the bounded operator on $\ell^{2}(\mathbb{N})$ whose matrix representation in this basis is the infinite matrix $H$ in \eqref{def:osH:intro}, i.e., the operator satisfying $\langle \mathcal{H} e_{j},e_{i}\rangle=h_{i,j}$, $i, j\in\mathbb{N}$. Let
\begin{equation}\label{Weylfunc:intro}
\phi_{j}(z):=\langle(zI-\mathcal{H})^{-1} e_{j},e_{1}\rangle,\qquad 1\leq j\leq p.
\end{equation}
We also define $\phi_{0}\equiv 1$. We have thus obtained a relation
\[
\mathcal{A}=(a^{(0)}, a^{(1)}, \ldots, a^{(p)})\longmapsto \Phi=(\phi_{0}, \phi_{1},\ldots,\phi_{p})
\]
which we will indicate by writing $\Phi=\Phi(\mathcal{A})$.

In this paper we will frequently use the following notation. If $f(z)$ is a Laurent series of the form $f(z)=\sum_{k\in\mathbb{Z}} c_{k} z^{-k}$, we write $c_{k}=[f]_{k}$. Our main result is the following:

\begin{theorem}\label{theo:main:intro}
Let $\mathcal{A}=(a^{(0)},\ldots,a^{(p)})$ and $\mathcal{B}=(b^{(0)},\ldots,b^{(p)})$ be two independent collections of random sequences with corresponding distributions $(\mu_{0},\ldots,\mu_{p})$, as indicated above. Let $\Phi(\mathcal{A})=(\phi_{0},\ldots,\phi_{p})$ and $\Phi(\mathcal{B})=(\psi_{0},\ldots,\psi_{p})$ be the associated systems of Weyl functions. Further, let $\alpha=(\alpha_{j}^{(k)})_{0\leq j\leq k\leq p}$ be a triangular array of independent random variables, where $\alpha_{j}^{(k)}$ has distribution $\mu_{k}$ for all $j$ and $k$, and such that $\alpha$ is independent of $\mathcal{A}$ and $\mathcal{B}$. Let $\sigma_{n}$ be the empirical measure \eqref{empmeas} of the matrix $H_{n}$ defined in \eqref{def:Hn:intro}. For each $\ell\in\mathbb{Z}_{\geq 0}$,
\begin{equation}\label{asympformintro}
\lim_{n\rightarrow\infty}\mathbb{E}\left(\int x^{\ell}\,d\sigma_{n}(x)\right)=\mathbb{E}([W]_{\ell+1}),
\end{equation}
where
\begin{equation}\label{funcW:intro}
W(z)=\frac{1}{z-\sum_{k=0}^{p}\sum_{j=0}^{k}\alpha_{j}^{(k)}\,\phi_{k-j}(z)\,\psi_{j}(z)}.
\end{equation} 
\end{theorem}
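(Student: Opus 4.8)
The plan is to pass from the spectral measure to traces of powers of $H_n$, localize to the bulk, and then identify the limiting bulk quantity with a diagonal entry of the resolvent of $\mathcal H$, which I would compute by a decoupling (self-energy) argument.

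First I would write $\int x^\ell\,d\sigma_n(x)=\frac1n\tr(H_n^\ell)=\frac1n\sum_{m=1}^n (H_n^\ell)_{mm}$, so the object of study is $\frac1n\sum_{m=1}^n \mathbb E[(H_n^\ell)_{mm}]$. The entry $(H_n^\ell)_{mm}$ is a weighted count of closed walks of length $\ell$ based at $m$ in the graph of $H_n$; since a single step changes the index by $+1$ (weight $1$) or by $-k$ with $0\le k\le p$ (weight one of the $a^{(k)}$), every such walk stays inside the window $\{m-\ell p,\dots,m+\ell\}$. Consequently, for every bulk index $m$ with $\ell p<m$ and $m+\ell\le n$ one has $(H_n^\ell)_{mm}=(\mathcal H^\ell)_{mm}$, and by shift-invariance of the i.i.d.-along-diagonals law the expectation $M_\ell:=\mathbb E[(\mathcal H^\ell)_{mm}]$ does not depend on such $m$. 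The at most $O(\ell p)=o(n)$ boundary indices contribute $O(1/n)$ after averaging, because the uniform bound on the entries makes $|(H_n^\ell)_{mm}|$ bounded by a constant depending only on $\ell$ and $p$. Hence $\frac1n\mathbb E\,\tr(H_n^\ell)\to M_\ell$, and the theorem reduces to the identity $M_\ell=\mathbb E([W]_{\ell+1})$. To package this, I would use that for $|z|$ large the Neumann series gives $G_{mm}(z):=\la (zI-\mathcal H)^{-1}e_m,e_m\ra=\sum_{\ell\ge0}(\mathcal H^\ell)_{mm}\,z^{-\ell-1}$, so that $(\mathcal H^\ell)_{mm}=[G_{mm}]_{\ell+1}$; since the entries are bounded, both $G_{mm}$ and $W$ have coefficients bounded uniformly in the randomness, so expectation and extraction of $[\cdot]_{\ell+1}$ commute, and it suffices to show that for a bulk site $m$ the expected coefficients of $G_{mm}(z)$ and of $W(z)$ coincide.

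The heart of the argument is a Schur-complement (self-energy) representation $G_{mm}(z)=\big(z-\Sigma_m(z)\big)^{-1}$, together with a factorization of $\Sigma_m$ across the cut separating the sites to the left of $m$ from those to its right. The main obstacle is that, since the band has width $p>1$, deleting a single site does not disconnect the two sides: the subdiagonals $a^{(k)}$ with $k\ge2$ provide bonds jumping directly between $\{m-p,\dots,m-1\}$ and $\{m+1,\dots,m+p\}$, so one must cut through the whole window of crossing bonds and bookkeep their contributions. A crossing bond coming from the $k$-th subdiagonal (hence carrying a weight with law $\mu_k$) can be resolved into an excursion penetrating $k-j$ steps into the right half-line and $j$ steps into the left half-line, and summing the weighted excursions over the size $k$ and over the split $0\le j\le k$ should produce exactly $\sum_{k=0}^{p}\sum_{j=0}^{k}(\cdot)\,\phi_{k-j}(z)\,\psi_{j}(z)$, with the extreme splits $j=0$ and $j=k$ handled by $\phi_0\equiv\psi_0\equiv1$. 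Here the right- and left-half-line excursion generating functions must be identified with the Weyl functions $\phi_{k-j}$ and $\psi_j$, and this identification is precisely where I would invoke the Hermite--Pad\'e characterization of Theorem~\ref{theo:Kal}, which expresses these half-line resolvent quantities through $\Phi$ and $\Psi$.

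Finally, I would use independence to read off the law of $\Sigma_m$. As $m\to\infty$ the right half-line, the left half-line, and the band entries sitting on the cut involve three disjoint, hence jointly independent, families of the $a^{(k)}_j$: in the limit the right excursions are governed by an independent copy $\mathcal A$ (yielding $\Phi=\Phi(\mathcal A)$), the left excursions by an independent copy $\mathcal B$ (yielding $\Psi=\Psi(\mathcal B)$), and the weights of the $k$-th crossing bonds by independent variables with law $\mu_k$, which are exactly the entries $\alpha_j^{(k)}$ of the triangular array. Substituting these into the self-energy gives $\Sigma_m\to\sum_{k=0}^p\sum_{j=0}^k\alpha_j^{(k)}\phi_{k-j}\psi_j$ coefficientwise, whence $G_{mm}\to W$ and $M_\ell=\mathbb E([W]_{\ell+1})$. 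The delicate points left to check are the exact combinatorial matching of crossing-bond excursions with the products $\phi_{k-j}\psi_j$, including the correct multiplicities that produce precisely the triangular array $(\alpha_j^{(k)})_{0\le j\le k\le p}$, and the uniform control in $m$ needed to pass all limits coefficient by coefficient.
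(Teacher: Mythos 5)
Your proposal follows essentially the same route as the paper: reduce to $\tfrac1n\mathbb{E}\,\tr(H_n^\ell)$, discard the $O(\ell p)$ boundary indices by locality of closed walks, and identify the stationary bulk quantity with $\mathbb{E}([W]_{\ell+1})$ by factoring the diagonal resolvent entry across a cut into right/left half-line Weyl functions and an independent triangular array of crossing-bond weights, which is exactly the content of Theorem~\ref{theo:rtso} together with the remark following it. The one step you explicitly defer --- the precise self-energy identity $G_{mm}=\bigl(z-\sum_{k}\sum_{j}\alpha_j^{(k)}\phi_{k-j}\psi_j\bigr)^{-1}$ with the correct bookkeeping of the crossing bonds for band width $p>1$ --- is where the paper invests its effort, establishing it at the level of finite sections via the cofactor expansions of Lemma~\ref{lem:linalg} (equations \eqref{equ:2} and \eqref{eq:traceresol:1}) and transferring it to the infinite operator through the Hermite--Pad\'e order-of-contact estimates of Theorem~\ref{theo:Kal}; your sketch of that step is correct in outline but is the nontrivial core rather than a routine verification.
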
 

From this result we deduce the following: If the eigenvalues of $H_{n}$ are all real for every $n$, then the mean measure $\mathbb{E}\sigma_{n}$ converges weakly to a probability distribution on the real line.

Since we have uniform boundedness of the supports of the measures $\mathbb{E} \sigma_{n}$, an equivalent formulation of \eqref{asympformintro} is that for all $z$ large enough,
\[
\lim_{n\rightarrow\infty}\mathbb{E}\left(\int\frac{1}{z-x}\,d\sigma_{n}(x)\right)=\mathbb{E}(W(z)).
\]
We will show that the function $W(z)$ can also be realized as a Weyl function of a two-sided bounded operator on $\ell^{2}(\mathbb{Z})$ defined as follows. Let $\{\hat{e}_{n}\}_{n\in\mathbb{Z}}$ be the standard basis in $\ell^{2}(\mathbb{Z})$. Take $p+1$ sequences of random variables $(a_{n}^{(k)})_{n\in\mathbb{Z}}$, with distribution $\mu_{k}$ and jointly independent, and let $\mathcal{M}$ be the operator on $\ell^{2}(\mathbb{Z})$ that satisfies
\[
\mathcal{M}\hat{e}_{n}=\hat{e}_{n-1}+\sum_{k=0}^{p} a_{n}^{(k)} \hat{e}_{n+k},\qquad n\in\mathbb{Z}.
\]
Then we have
\[
W(z)=\langle(zI-\mathcal{M})^{-1}\hat{e}_{0},\hat{e}_{0}\rangle.
\]

The structure of this paper is as follows. In Section~\ref{sec:bhm} we obtain some essential identities for the characteristic polynomials of finite banded Hessenberg matrices. In Section~\ref{sec:HP} we consider the operator $\mathcal{H}$ defined above and discuss the Hermite-Pad\'{e} approximation to the system of functions $(\phi_{1},\ldots,\phi_{p})$ in \eqref{Weylfunc:intro}. In Section~\ref{sec:tsop} we analyze the two-sided operator $\mathcal{M}$ and obtain some necessary estimates in the order of approximation of the Weyl functions \eqref{eq:idwj} by certain rational functions. Finally, in Section~\ref{sec:rcp} we prove Theorem~\ref{theo:main:intro}. We also investigate the joint probability distribution of the random vector $(\phi_{1}(z),\ldots,\phi_{p}(z))$, and its relation to the distributions $\mu_{k}$, $0\leq k\leq p$, and the function $\mathbb{E}(W(z))$.

\section{Banded lower Hessenberg matrices}\label{sec:bhm}

Throughout the paper we use the following notation. If $A$ is a square matrix, $A^{[r]}$ denotes the submatrix of $A$ obtained by deleting the first $r$ rows and columns of $A$, and $A_{[r]}$ denotes the submatrix obtained by deleting the last $r$ rows and columns of $A$. The $(i,j)$-entry of $A$ is denoted $A(i,j)$.

\begin{lemma}\label{lem:linalg}
Let $B_{n}=(m_{i,j})_{1\leq i,j\leq n}$ be the $n\times n$ matrix with $p+2$ diagonals, with entries
\[
\begin{cases}
m_{j-1,j}=1, & 2\leq j\leq n,\\
m_{j+k,j}=b_{j}^{(k)}, & 0\leq k\leq p,\,\,\,1\leq j\leq n-k,\,\,\,b_{j}^{(k)}\in\mathbb{C},\\
m_{i,j}=0, & \mbox{otherwise},
\end{cases}
\]
that is,
\[
B_{n}=\begin{pmatrix}
b_{1}^{(0)} & 1 & & & 0 \\
\vdots & \ddots & \ddots \\
b_{1}^{(p)} & & \ddots & \ddots & \\
 & \ddots & & \ddots & 1 \\
0 &  & b_{n-p}^{(p)} & \cdots & b_{n}^{(0)}
\end{pmatrix}.
\]
Let
\begin{equation}\label{def:charpol}
\begin{aligned}
Q_{n}(z) & :=\det(z I_{n}-B_{n}),\\
Q_{j}^{+}(z) & :=\det((z I_{n}-B_{n})^{[n-j]}),\quad 0\leq j\leq n-1,\\
Q_{j}^{-}(z) & :=\det((z I_{n}-B_{n})_{[n-j]}),\quad 0\leq j\leq n-1,
\end{aligned}
\end{equation}
where $Q_{0}^{\pm}\equiv 1$. We also define $Q_{\ell}^{\pm}\equiv 0$ for $\ell\leq -1$. 

We have
\begin{equation}\label{equ:1}
Q_{n}'(z)=\sum_{j=1}^{n} Q_{n-j}^{+}(z)\,Q_{j-1}^{-}(z),
\end{equation}
and for each $1\leq j\leq n$, 
\begin{equation}\label{equ:2}
Q_{n}(z)=(z-b^{(0)}_{j})\, Q_{n-j}^{+}(z)\,Q_{j-1}^{-}(z)-\sum_{\ell=1}^{p}\sum_{k=0}^{\ell} b_{j-k}^{(\ell)}\,Q_{n-j+k-\ell}^{+}(z)\,Q_{j-k-1}^{-}(z).
\end{equation}
If $z$ is not an eigenvalue of $B_{n}$, then for every $1\leq j\leq n$, 
\begin{align}
(z I_{n}-B_{n})^{-1}(j,j) & =\frac{Q_{n-j}^{+}(z)\,Q_{j-1}^{-}(z)}{Q_{n}(z)},\label{eq:traceresol:1}\\
(z I_{n}-B_{n})^{-1}(1,j) & =\frac{Q_{n-j}^{+}(z)}{Q_{n}(z)}.\label{eq:frapprox}
\end{align}
\end{lemma}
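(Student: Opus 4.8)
My plan rests on a single structural observation about $C:=zI_{n}-B_{n}$: it is lower Hessenberg with the value $-1$ along its entire superdiagonal, so the only nonzero entries strictly above the diagonal lie on that superdiagonal. I will exploit this through two elementary facts. First, if we delete row $i$ and column $i$, the block consisting of rows $\leq i-1$ and columns $\geq i+1$ vanishes (those positions sit strictly above the superdiagonal), so the resulting matrix is block lower triangular and $\det C^{(i)}=Q^{-}_{i-1}\,Q^{+}_{n-i}$. Second, whenever a superdiagonal $-1$ is the unique surviving entry in its row or column of a minor, that row/column can be \emph{peeled}: each peeling contributes a factor $-1$ and leaves a smaller minor of the same type.

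With these facts, \eqref{eq:traceresol:1} and \eqref{eq:frapprox} follow from Cramer's rule. For \eqref{eq:traceresol:1}, $(C^{-1})(j,j)=\det C^{(j)}/Q_{n}=Q^{-}_{j-1}Q^{+}_{n-j}/Q_{n}$. For \eqref{eq:frapprox}, $(C^{-1})(1,j)=(-1)^{j+1}M_{j,1}/Q_{n}$, where $M_{j,1}$ deletes row $j$ and column $1$; its top $j-1$ rows form a lower triangular block with $-1$'s on the diagonal, so peeling decouples a factor $(-1)^{j-1}$ from the bottom-right block $Q^{+}_{n-j}$, giving $(-1)^{j+1}M_{j,1}=Q^{+}_{n-j}$. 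Equation \eqref{equ:1} I would read off Jacobi's formula: only the diagonal entries of $C$ depend on $z$, each with derivative $1$, so $Q_{n}'(z)=\sum_{i=1}^{n}\det C^{(i)}=\sum_{i=1}^{n}Q^{-}_{i-1}Q^{+}_{n-i}$.

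The real work is \eqref{equ:2}, which I would obtain from the cofactor expansion of $\det C$ along row $j$. That row carries $-1$ at column $j+1$, the value $z-b^{(0)}_{j}$ at column $j$, and $-b^{(k)}_{j-k}$ at column $j-k$ for $1\leq k\leq p$. The diagonal minor gives $M_{j,j}=Q^{-}_{j-1}Q^{+}_{n-j}$ by the block decomposition, producing the leading term $(z-b^{(0)}_{j})Q^{+}_{n-j}Q^{-}_{j-1}$. Each subdiagonal minor $M_{j,j-k}$ first decouples a factor $Q^{-}_{j-k-1}$ (rows $\leq j-k-1$ have no support in the surviving columns $\geq j-k+1$) and then peels $k$ superdiagonal $-1$'s down to $Q^{+}_{n-j}$, so $M_{j,j-k}=(-1)^{k}Q^{-}_{j-k-1}Q^{+}_{n-j}$; combined with the expansion sign these supply exactly the terms of \eqref{equ:2} with $\ell=k$.

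The one genuinely delicate minor is the superdiagonal one, $M_{j,j+1}$, and this is where I expect the main difficulty. Its top-right block (rows $\leq j-1$, columns $\geq j+2$) again vanishes, but now the surviving row and column counts on each side differ by one, so instead of a single product I would use a Laplace expansion along the top $j-1$ rows: each contributing term selects exactly one entry of the bottom-left block, necessarily a subdiagonal entry $C(i+\ell,i)=-b^{(\ell)}_{i}$ with $j-\ell+1\leq i\leq j$, multiplied by a top minor that peels to $\pm Q^{-}_{i-1}$ and a bottom minor that peels to $\pm Q^{+}_{n-i-\ell}$. Reindexing $i=j-k$ then recovers precisely the remaining ($k<\ell$) terms of the double sum in \eqref{equ:2}. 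The obstacle is purely sign bookkeeping: tracking the Laplace sign together with the parities accumulated in the two peeling steps and checking that they collapse to the single overall minus sign in \eqref{equ:2}. As a cross-check, and as an alternative route that sidesteps $M_{j,j+1}$ entirely, one can establish the short recursions $Q^{-}_{t}=(z-b^{(0)}_{t})Q^{-}_{t-1}-\sum_{k=1}^{p}b^{(k)}_{t-k}Q^{-}_{t-k-1}$ and its $Q^{+}$ analogue, verify \eqref{equ:2} for $j=1$ from the first-row relation $\sum_{m}(C^{-1})(1,m)\,C(m,1)=1$, and then show the right-hand side of \eqref{equ:2} is independent of $j$ by a telescoping argument based on these two recursions.
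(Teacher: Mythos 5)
Your proposal is correct, and for everything except one step it coincides with the paper's proof: \eqref{equ:1} and \eqref{eq:traceresol:1} via Jacobi's formula and the block-triangular evaluation of the $(j,j)$-cofactor, \eqref{eq:frapprox} via the adjugate formula with the peeling of the superdiagonal $-1$'s, and \eqref{equ:2} via cofactor expansion along row $j$, with the superdiagonal minor $Q^{[j;j+1]}$ as the crux. The genuine difference is how that minor is evaluated. The paper expands $Q^{[j;j+1]}$ along its $j$-th row and iterates $p$ times, harvesting the triangular array \eqref{triangarray} one row at a time; you instead perform a single Laplace expansion along the top $j-1$ rows, which (because those rows are supported in the first $j$ surviving columns) forces the omission of exactly one column $i\in\{1,\dots,j\}$ from the top block, and then expand the complementary minor along that column, harvesting the array one omitted column at a time. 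The two computations are of comparable length; yours trades the iteration for a one-shot expansion at the cost of concentrating all the signs in one place. On the point you flag as the obstacle: the signs do collapse. The Laplace sign for omitting column $i$ is $(-1)^{j-i}$, the top minor peels to $(-1)^{j-i}Q^{-}_{i-1}$, and in the complementary minor the cofactor sign $(-1)^{(r-j)+1}$ of the entry $-b^{(\ell)}_{i}$ at row $r=i+\ell$ cancels against the $(-1)^{r-j-1}$ produced by peeling the residual minor down to $Q^{+}_{n-r}$; the net contribution is $-b^{(\ell)}_{i}\,Q^{-}_{i-1}\,Q^{+}_{n-i-\ell}$, and summing over $i=j-k$ with $0\le k\le \ell-1$ gives exactly \eqref{eq:idQjjpo}. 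Your fallback route via the order-$(p+1)$ recursions for $Q^{\pm}$ and a telescoping argument would also work, but it is not needed.
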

\begin{proof}
By Jacobi's derivative formula, if $A(z)$ is a square differentiable matrix, then
\[
\frac{d}{dz}\det(A(z))=\tr\left(\mathrm{adj}(A(z)) A'(z)\right),
\]
where $\mathrm{adj}(A(z))$ is the adjugate of $A(z)$. If we apply this formula to $A(z)=z I_{n}-B_{n}$, and note that the $(j,j)$-cofactor of $zI_{n}-B_{n}$ is $Q_{n-j}^{+}(z)\,Q_{j-1}^{-}(z)$, we obtain \eqref{equ:1} and \eqref{eq:traceresol:1}. 

Let $1\leq j\leq n$. If $r$ and $s$ are index sets, let $Q^{[r;s]}(z)$ denote the determinant of the submatrix of $z I_{n}-B_{n}$ obtained after deleting the rows with index in $r$ and the columns with index in $s$. By the adjugate formula for the inverse of a matrix,
\[
(z I_{n}-B_{n})^{-1}(1,j)=\frac{(-1)^{j+1} Q^{[j;1]}(z)}{Q_{n}(z)}=\frac{(-1)^{j+1}(-1)^{j-1} Q_{n-j}^{+}(z)}{Q_{n}(z)}=\frac{Q_{n-j}^{+}(z)}{Q_{n}(z)}
\]
which is \eqref{eq:frapprox}.

Expanding the determinant $Q_{n}(z)=\det(zI_{n}-B_{n})$ along row $j$, we obtain
\[
Q_{n}(z)=(z-b^{(0)}_{j})\,Q_{n-j}^{+}(z)\,Q_{j-1}^{-}(z)+Q^{[j;j+1]}(z)
-\sum_{\ell=1}^{p}(-1)^{\ell}\,b_{j-\ell}^{(\ell)}\,Q^{[j;j-\ell]}(z),
\]
understanding $Q^{[j;m]}\equiv 0$ if $m\leq 0$, and $Q^{[n;n+1]}\equiv 0$. It is easy to see that $Q^{[j;j-\ell]}(z)=(-1)^{\ell}\,Q_{n-j}^{+}(z)\,Q_{j-1-\ell}^{-}(z)$. So we have
\begin{equation}\label{equ:3}
Q_{n}(z)=(z-b^{(0)}_{j})\,Q_{n-j}^{+}(z)\,Q_{j-1}^{-}(z)+Q^{[j;j+1]}(z)
-\sum_{\ell=1}^{p}\,b_{j-\ell}^{(\ell)}\,Q_{n-j}^{+}(z)\,Q_{j-1-\ell}^{-}(z).
\end{equation}
Note that if $j=n$, then the proof of \eqref{equ:2} is complete since in this case it reduces to \eqref{equ:3}. 

Assume that $j\leq n-1$. To finish the proof of \eqref{equ:2}, we need to show that
\begin{equation}\label{eq:idQjjpo}
Q^{[j:j+1]}(z)=-\sum_{\ell=1}^{p}\sum_{k=0}^{\ell-1} b_{j-k}^{(\ell)}\,Q_{n-j+k-\ell}^{+}(z)\,Q_{j-k-1}^{-}(z).
\end{equation}
This formula involves the coefficients in the triangular array
\begin{equation}\label{triangarray}
\begin{array}{cccccc}
-b_{j-p+1}^{(p)} & -b_{j-p+2}^{(p-1)} & \cdots & -b_{j-2}^{(3)} & -b_{j-1}^{(2)} & -b_{j}^{(1)} \\[0.5em]
 & -b_{j-p+2}^{(p)} & \cdots & -b_{j-2}^{(4)} & -b_{j-1}^{(3)} & -b_{j}^{(2)} \\
  & & \ddots & \vdots & \vdots & \vdots \\
  & & & -b_{j-2}^{(p)} & -b_{j-1}^{(p-1)} & -b_{j}^{(p-2)} \\[0.5em]
  & & & & -b_{j-1}^{(p)} & -b_{j}^{(p-1)} \\[0.5em]
  & & & & & -b_{j}^{(p)}
\end{array}
\end{equation}
if $p\leq j\leq n-p$, but the array of coefficients in the formula will be smaller if $j$ is not in the indicated range. For example, if $j=1$, then the array reduces to a single column with $p$ coefficients, and if $j=n-1$, the array reduces to a single row with $p$ coefficients. If we expand the determinant $Q^{[j;j+1]}$ along the row that contains the first row of coefficients in \eqref{triangarray} (the $j$-th row in the determinant $Q^{[j;j+1]}$), then we obtain
\begin{align*}
Q^{[j;j+1]}(z) & =\sum_{\ell=1}^{p}(-b_{j-\ell+1}^{(\ell)})\,(-1)^{\ell-1}\, Q^{[j,j+1;j-\ell+1,j+1]}(z)+Q^{[j,j+1;j+1,j+2]}(z)\\
 & =-\sum_{\ell=1}^{p} b_{j-\ell+1}^{(\ell)}\,Q_{n-j-1}^{+}(z)\,Q_{j-\ell}^{-}(z)+Q^{[j,j+1;j+1,j+2]}(z),
\end{align*}
using $Q^{[j,j+1;j-\ell+1,j+1]}=(-1)^{\ell-1}\,Q^{+}_{n-j-1}\,Q_{j-\ell}^{-}$. If we now expand the determinant $Q^{[j,j+1;j+1,j+2]}$ along its $j$-th row, we will obtain an identity similar to the one for $Q^{[j;j+1]}$, but involving the coefficients in the second row in \eqref{triangarray}. After a repetition of this procedure $p$ times, we arrive at \eqref{eq:idQjjpo}.
\end{proof}

\section{One-sided operators and Hermite-Pad\'{e} approximation}\label{sec:HP}

We begin our discussion in this section with a collection of $p+1$ bounded deterministic sequences of complex numbers $(a_{n}^{(k)})_{n=1}^{\infty}$, $0\leq k\leq p$. With these sequences we construct the infinite matrix $H=(h_{i,j})_{i,j=1}^{\infty}$ with entries
\begin{equation}\label{def:osH}
\begin{cases}
h_{j-1,j}=1, & j\geq 2,\\
h_{j+k,j}=a_{j}^{(k)}, & 0\leq k\leq p, \quad j\geq 1,\\
h_{i,j}=0, & \mbox{otherwise}.
\end{cases}
\end{equation}

So the main diagonal of $H$ is formed by the sequence $(a_{n}^{(0)})$, the $k$-th subdiagonal, $1\leq k\leq p$, is formed by the sequence $(a^{(k)}_{n})$, the entries in the first superdiagonal are all equal to $1$, and the remaining entries are $0$. We denote by $H_{n}$ the principal $n\times n$ truncation of $H$:
\[
H_{n}=\begin{pmatrix}
a_{1}^{(0)} & 1 & & & 0 \\
\vdots & \ddots & \ddots \\
a_{1}^{(p)} & & \ddots & \ddots & \\
 & \ddots & & \ddots & 1 \\
0 &  & a_{n-p}^{(p)} & \cdots & a_{n}^{(0)}
\end{pmatrix}.
\] 

Let $\{e_{n}\}_{n=1}^{\infty}$ be the standard basis in $\ell^{2}(\mathbb{N})$. Consider the bounded operator $\mathcal{H}$ on $\ell^{2}(\mathbb{N})$ whose matrix representation in the standard basis is the matrix $H$, i.e., the operator defined by
\begin{equation}\label{def:op}
\begin{cases}
\mathcal{H} e_{1}=\sum_{k=0}^{p} a_{1}^{(k)} e_{k+1},\\
\mathcal{H} e_{n}=e_{n-1}+\sum_{k=0}^{p} a_{n}^{(k)} e_{n+k}, & n\geq 2. 
\end{cases}
\end{equation}
The boundedness of the operator follows from the boundedness of the diagonal sequences $(a_{n}^{(k)})$. Indeed, if $C>0$ is an upper bound for all $|a_{n}^{(k)}|$, it is easy to obtain the estimate $\|\mathcal{H}\|\leq (C+1)(p+2)$.

Let $(z I-\mathcal{H})^{-1}$ be the resolvent operator, and let 
\begin{equation}\label{def:resolvphi}
\phi_{j}(z):=\langle(z I-\mathcal{H})^{-1} e_{j}, e_{1}\rangle,\qquad 1\leq j\leq p.
\end{equation}
These functions are analytic in the complement of the spectrum of $\mathcal{H}$. The Laurent series at infinity of $\phi_{j}(z)$ is 
\begin{equation}\label{eq:Laurserphij}
\phi_{j}(z)=\sum_{n=0}^{\infty}\frac{\langle\mathcal{H}^{n} e_{j},e_{1}\rangle}{z^{n+1}}
=\frac{1}{z^{j}}+O\left(\frac{1}{z^{j+1}}\right),
\end{equation}
which converges absolutely for $|z|>\|\mathcal{H}\|$. The second equality follows easily from \eqref{def:op}, since $\langle\mathcal{H}^{j-1} e_{j},e_{1}\rangle=1$ and $\langle\mathcal{H}^{n} e_{j},e_{1}\rangle=0$ for $0\leq n\leq j-2$.

We also consider the difference equation of order $p+1$
\begin{equation}\label{def:diffeq}
z y_{n}=y_{n+1}+a^{(0)}_{n+1} y_{n}+a_{n}^{(1)} y_{n-1}+\cdots+a_{n-p+1}^{(p)} y_{n-p},\qquad n\geq p.
\end{equation}
A basis for the space of all solutions $(y_{n})_{n=0}^{\infty}$ of \eqref{def:diffeq} is formed by the following $p+1$ polynomial sequences:
\begin{equation}\label{polsol}
\begin{aligned}
q_{n}(z) & :=\det(z I_{n}-H_{n}), \qquad n\geq 0,\\
q_{n,j}(z) & :=\det((z I_{n}-H_{n})^{[j]}),\qquad 1\leq j\leq p,\quad n\geq 0.
\end{aligned}
\end{equation}
Recall that $(z I_{n}-H_{n})^{[j]}$ is the submatrix of $z I_{n}-H_{n}$ obtained 
after deleting the first $j$ rows and columns. With this notation, we understand that $q_{0}, q_{j,j}\equiv 1$, and $q_{n,j}\equiv 0$ for $n<j$. These conditions show that the sequences in \eqref{polsol} are linearly independent. To see that they are indeed solutions of \eqref{def:diffeq}, expand the determinant $\det((z I_{n+1}-H_{n+1})^{[j]})$ along its last row. Note that 
$q_{n}$ is of degree $n$, and $q_{n,j}$ is of degree $n-j$ for $n\geq j$. The following result is fundamental for our analysis.

\begin{theorem}[Kalyagin \cite{Kal}, see also \cite{AptKal}]\label{theo:Kal}
For each $n\geq 0$, the vector of rational functions
\[
\left(\frac{q_{n,1}}{q_{n}}, \frac{q_{n,2}}{q_{n}},\ldots,\frac{q_{n,p}}{q_{n}}\right)
\]
is an Hermite-Pad\'{e} approximant at infinity for the system of resolvent functions $(\phi_{1}, \phi_{2},\ldots,\phi_{p})$, with respect to the multi-index
\begin{equation}\label{eq:HPindex}
(n_{1},n_{2},\ldots,n_{p})=(\underbrace{k+1,k+1,\ldots,k+1}_{s},k,\ldots,k),
\end{equation}
where $n=kp+s$ is the decomposition of $n$ modulo $p$. This means
\begin{equation}\label{eq:HPprop}
q_{n}(z)\, \phi_{j}(z)-q_{n,j}(z)=O\left(\frac{1}{z^{n_{j}+1}}\right),\quad z\rightarrow\infty,
\end{equation}
for each $1\leq j\leq p$. 
\end{theorem}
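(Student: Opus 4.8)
The plan is to reduce the assertion \eqref{eq:HPprop} to a comparison of Laurent coefficients at infinity, using the finite-matrix identity already available in Lemma~\ref{lem:linalg}. Taking $B_n=H_n$ in \eqref{eq:frapprox}, and noting $Q_n=q_n$ and $Q_{n-j}^{+}(z)=\det((zI_n-H_n)^{[j]})=q_{n,j}(z)$, I would first record the exact identity
\[
\frac{q_{n,j}(z)}{q_n(z)}=(zI_n-H_n)^{-1}(1,j)=\sum_{m\ge 0}\frac{\langle H_n^{m}\hat e_j,\hat e_1\rangle}{z^{m+1}},
\]
where $\{\hat e_i\}_{i=1}^{n}$ is the standard basis of $\mathbb C^{n}$. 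Since \eqref{eq:Laurserphij} gives $\phi_j(z)=\sum_{m\ge 0}\langle\mathcal H^{m}e_j,e_1\rangle z^{-(m+1)}$, the remainder factors as
\[
q_n(z)\,\phi_j(z)-q_{n,j}(z)=q_n(z)\sum_{m\ge 0}\frac{\langle\mathcal H^{m}e_j,e_1\rangle-\langle H_n^{m}\hat e_j,\hat e_1\rangle}{z^{m+1}}.
\]
Because $\deg q_n=n$, it suffices to prove that these two families of matrix entries coincide for all $m\le n+n_j-1$: the difference series is then $O(z^{-(n+n_j+1)})$, and multiplying by $q_n$ yields exactly $O(z^{-(n_j+1)})$, which is \eqref{eq:HPprop}.

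To compare the coefficients I would expand each matrix power as a weighted sum over lattice paths. With $\langle\mathcal H e_i,e_{i'}\rangle=h_{i',i}$, one has $\langle\mathcal H^{m}e_j,e_1\rangle=\sum\prod_{t=0}^{m-1}h_{i_{t+1},i_t}$ over all sequences $j=i_0,i_1,\dots,i_m=1$ in $\mathbb N$, while $\langle H_n^{m}\hat e_j,\hat e_1\rangle$ is the same sum restricted to sequences with every $i_t\in\{1,\dots,n\}$. By \eqref{def:op} the only admissible moves from an index $i$ are $i\mapsto i-1$ (weight $1$) and $i\mapsto i+k$, $0\le k\le p$ (weight $a_i^{(k)}$); the weights agree for $\mathcal H$ and $H_n$ on in-range moves, and a path can leave $\{1,\dots,n\}$ only by stepping above $n$ (it can never drop below $1$). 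Hence the difference of the two coefficients is the weighted sum over paths $j\to 1$ of length $m$ that visit some index $\ge n+1$, and it vanishes whenever no such path exists.

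The crux is therefore a purely combinatorial estimate of the shortest length of a path from $j$ to $1$ that reaches level $n+1$. A path with index-raising steps of total size $U$ and $D$ index-lowering steps must satisfy $D=U+j-1$ (the net displacement is $1-j$), and its maximal index is at most $j+U$, so reaching level $n+1$ forces $U\ge n+1-j$; since each raising step increases the index by at most $p$, the number of such steps is at least $\lceil U/p\rceil$, giving total length at least $n+\lceil (n+1-j)/p\rceil$, minimized at $U=n+1-j$. Writing $n=kp+s$ and checking the two ranges $1\le j\le s$ and $s<j\le p$ separately, the elementary identity $\lceil (n+1-j)/p\rceil=n_j$ holds in both cases, so the shortest offending path has length at least $n+n_j$. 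Consequently the matrix coefficients agree for every $m\le n+n_j-1$, which is precisely the threshold required above.

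The main obstacle I expect is this last combinatorial step: one must handle the asymmetry between the sub-bandwidth $p$ (moves raising the index by up to $p$) and the unit super-bandwidth (moves lowering it by one), and then match the resulting quantity $\lceil(n+1-j)/p\rceil$ to the components $n_j$ of the multi-index through the division $n=kp+s$. Verifying the boundary cases $j=s$ and $j=s+1$, where the bound is attained and determines the exact order, is the part that must be checked by hand.
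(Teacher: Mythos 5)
Your argument is correct, but it is worth noting that the paper does not prove this statement at all: Theorem~\ref{theo:Kal} is imported verbatim from Kalyagin \cite{Kal}, and the only ingredient the paper itself supplies is the identity \eqref{eq:resolvqnqnj} (your starting point, obtained from \eqref{eq:frapprox}). What you have written is therefore a self-contained, elementary replacement for the citation. The reduction to equality of the Laurent coefficients $\langle\mathcal H^{m}e_{j},e_{1}\rangle$ and $(H_{n}^{m})(1,j)$ for $m\leq n+n_{j}-1$ is sound, since $q_{n}$ is monic of degree $n$ and the tail series converges for $|z|>\max(\|\mathcal H\|,\|H_{n}\|)$. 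The path-counting step is also correct: $H_{n}$ is a principal submatrix of $H$, so the two coefficients are weighted sums over the same paths except for those exiting $\{1,\dots,n\}$, which can only happen through the top because the unique index-lowering move $i\mapsto i-1$ is unavailable at $i=1$; and your displacement bookkeeping $D=U+j-1$, $U\geq n+1-j$, together with the single identity $\lceil(n+1-j)/p\rceil=\lfloor(n-j)/p\rfloor+1=n_{j}$ (valid for all $j\leq n$, no case split actually needed), gives the length bound $m\geq n+n_{j}$ for any offending path. Compared with deferring to \cite{Kal}, your route buys a short, purely combinatorial proof that makes transparent where the asymmetric multi-index \eqref{eq:HPindex} comes from, namely the asymmetry between the bandwidth $p$ below the diagonal and the unit bandwidth above it; its only cost is that it establishes just the interpolation estimate \eqref{eq:HPprop} stated here, not the finer structural content of Kalyagin's theory. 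The only loose end is the degenerate range $j>n$, where $n_{j}\leq 0$ and \eqref{eq:HPprop} follows trivially from $q_{n,j}\equiv 0$ and $\phi_{j}=z^{-j}+O(z^{-j-1})$; a one-line remark would close it.
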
 

The $j$th component of \eqref{eq:HPindex} is $n_{j}=\lfloor (n-j)/p\rfloor+1$, where $\lfloor\cdot\rfloor$ is the floor function. Note that by \eqref{eq:frapprox},
\begin{equation}\label{eq:resolvqnqnj}
\frac{q_{n,j}(z)}{q_{n}(z)}=(z I_{n}-H_{n})^{-1}(1,j),\qquad 1\leq j\leq p,
\end{equation}
see the analogy between this formula and \eqref{def:resolvphi}.

If we eliminate the first row and the first column of the infinite matrix $H$ in \eqref{def:osH}, we obtain an infinite matrix $H_{1}$ with corresponding operator $\mathcal{H}_{1}$ on $\ell^{2}(\mathbb{N})$ given by
\[
\begin{cases}
\mathcal{H}_{1} e_{1}=\sum_{k=0}^{p} a_{2}^{(k)} e_{k+1},\\
\mathcal{H}_{1} e_{n}=e_{n-1}+\sum_{k=0}^{p} a_{n+1}^{(k)} e_{n+k}, & n\geq 2. 
\end{cases}
\]
The associated resolvent functions are
\begin{equation}\label{def:resolvphi1j}
\phi_{1,j}(z):=\langle(zI-\mathcal{H}_{1})^{-1} e_{j},e_{1}\rangle,\qquad 1\leq j\leq p.
\end{equation}
It is clear that the functions $\phi_{1,j}$ are also analytic in $\{z\in\mathbb{C}: |z|>\|\mathcal{H}\|\}$.

\begin{lemma}\label{lem:relfuncphi}
The following relations hold for every $|z|>\|\mathcal{H}\|$,
\begin{align}
\phi_{1}(z) & =\frac{1}{z-a_{1}^{(0)}-\sum_{k=1}^{p} a_{1}^{(k)} \phi_{1,k}(z)},\label{rel:phis:1}\\
\phi_{j}(z) & =\frac{\phi_{1,j-1}(z)}{z-a_{1}^{(0)}-\sum_{k=1}^{p} a_{1}^{(k)} \phi_{1,k}(z)},\qquad 2\leq j\leq p.\label{rel:phis:2}
\end{align}
\end{lemma}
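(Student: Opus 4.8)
The plan is to express the resolvent matrix entries $\phi_j(z) = \langle (zI-\mathcal{H})^{-1} e_j, e_1\rangle$ in terms of the corresponding entries for the once-shifted operator $\mathcal{H}_1$, using the first-row structure of $\mathcal{H}$. The cleanest route is to work directly with the resolvent identity. Write $R(z) = (zI-\mathcal{H})^{-1}$ and set $u = R(z) e_j$, so that $(zI-\mathcal{H})u = e_j$. I want to read off $\langle u, e_1\rangle = \phi_j(z)$. The key observation is that $\mathcal{H}$ acts on the basis by \eqref{def:op}, and $\mathcal{H}_1$ is obtained by deleting the first row and column of $H$, i.e. $\mathcal{H}_1$ is the compression of $\mathcal{H}$ to the subspace spanned by $\{e_2, e_3, \dots\}$ after relabelling $e_{n+1} \mapsto e_n$.

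First I would set up the linear system componentwise. Writing $u = \sum_{m\geq 1} u_m e_m$ and pairing $(zI-\mathcal{H})u = e_j$ against each $e_i$ gives a system of equations. Pairing against $e_1$ and using \eqref{def:op} isolates the top equation
\[
z u_1 - a_1^{(0)} u_1 - u_2 = \langle e_j, e_1\rangle = \delta_{j,1}.
\]
The remaining equations, for $i\geq 2$, are exactly the equations for the shifted operator $\mathcal{H}_1$ acting on the tail $(u_2, u_3, \dots)$, but with an inhomogeneous driving term coming from the coupling of $e_1$ back into the lower rows via the subdiagonal entries $a_1^{(k)}$, together with the right-hand side $e_j$.

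The main step — and the place where the first-row entries $a_1^{(k)}$ enter — is to solve the tail system. Because the lower-right block of $zI - \mathcal{H}$ (rows and columns $\geq 2$) is precisely $zI - \mathcal{H}_1$ after the index shift, I can invert it using $(zI-\mathcal{H}_1)^{-1}$ and read off the quantities $\phi_{1,k}(z) = \langle(zI-\mathcal{H}_1)^{-1}e_k, e_1\rangle$ as the relevant entries. Concretely, expressing $u_2$ in terms of $u_1$ through the tail solve yields $u_2 = \big(\sum_{k=1}^p a_1^{(k)} \phi_{1,k}(z)\big) u_1$ in the case $j=1$, and substituting into the top equation gives
\[
\Big(z - a_1^{(0)} - \sum_{k=1}^p a_1^{(k)}\phi_{1,k}(z)\Big)\,u_1 = 1,
\]
which is \eqref{rel:phis:1}. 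For $2\leq j\leq p$ the right-hand side $e_j$ sits in the tail, contributing an extra term $\phi_{1,j-1}(z)$ (the shifted resolvent entry from row $1$ to the relevant tail position) in the expression for $u_2$, and dividing by the same denominator produces \eqref{rel:phis:2}. The identity \eqref{eq:HPprop} and the Laurent expansion \eqref{eq:Laurserphij} guarantee that all these functions are analytic for $|z| > \|\mathcal{H}\|$, so the manipulations are valid there and the denominator is nonvanishing in that region.

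The hard part will be the bookkeeping in the tail solve: carefully tracking how the index shift $e_{n+1}\mapsto e_n$ relates the tail block of $zI-\mathcal{H}$ to $zI-\mathcal{H}_1$, and correctly identifying which entry $\phi_{1,k}$ or $\phi_{1,j-1}$ appears. An alternative, perhaps cleaner, approach is to avoid the infinite linear system entirely and instead pass to the finite truncations: using Lemma \ref{lem:linalg}, one has $q_{n,j}/q_n = (zI_n - H_n)^{-1}(1,j)$ via \eqref{eq:resolvqnqnj}, and the analogous finite identities for the shifted matrix $H_n^{[1]}$ (whose characteristic polynomials are the $q_{n,1}$'s), then express the cofactor expansion \eqref{equ:2} with $j=1$ as a relation between $q_n$, $q_{n,k}$ and the shifted polynomials, and finally take $n\to\infty$ invoking the convergence of the approximants $q_{n,j}/q_n \to \phi_j$ and their shifted analogues to $\phi_{1,k}$ for $|z|>\|\mathcal{H}\|$. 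I would verify the finite-$n$ identity first, since it reduces everything to the already-proved determinant formulas, and then justify the limit by uniform convergence on $\{|z| > \|\mathcal{H}\|\}$, which follows from the uniform boundedness of the spectra.
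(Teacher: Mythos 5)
Your primary argument is correct but proceeds by a genuinely different route from the paper. You solve the infinite linear system $(zI-\mathcal{H})u=e_{j}$ directly via a block (Schur-complement) decomposition: the tail block of $zI-\mathcal{H}$ is $zI-\mathcal{H}_{1}$ after the index shift, its inverse exists for $|z|>\|\mathcal{H}\|$ since $\mathcal{H}_{1}$ is unitarily equivalent to a compression of $\mathcal{H}$, and back-substitution into the first row yields $\bigl(z-a_{1}^{(0)}-\sum_{k}a_{1}^{(k)}\phi_{1,k}(z)\bigr)u_{1}=\delta_{1,j}+\phi_{1,j-1}(z)$, which gives both identities at once and shows the denominator cannot vanish. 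The paper instead never touches the infinite system: it expands $\det(zI_{n}-H_{n})$ along the first column to get the finite identity $q_{n}=(z-a_{1}^{(0)})q_{n,1}-\sum_{j}a_{1}^{(j)}q_{n,j+1}$, divides by $q_{n}$, and uses Theorem~\ref{theo:Kal} to assert that each fixed Laurent coefficient of $q_{n,j}/q_{n}$ and of $q_{n,j+1}/q_{n,1}$ stabilizes to that of $\phi_{j}$ and $\phi_{1,j}$ respectively for $n$ large, so the identity passes to the limit coefficient by coefficient. Your alternative sketch is essentially this proof, with one small inaccuracy: the needed convergence of $q_{n,j}/q_{n}$ does not follow from boundedness of the spectra alone, but from the Hermite--Pad\'e order of contact (coefficientwise stabilization) combined with the uniform bound $|H_{n}^{s}(1,j)|\leq\|H_{n}\|^{s}$, which is how the paper avoids any convergence claim beyond matching finitely many Laurent coefficients. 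What your direct approach buys is independence from the Hermite--Pad\'e machinery and an immediate proof that the denominator is nonzero; what the paper's approach buys is that the same determinant identities and coefficient-matching technique are reused verbatim in the subsequent corollary and in Section~\ref{sec:tsop}.
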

\begin{proof}
If we expand the determinant $q_{n}(z)=\det(z I_{n}-H_{n})$ along its first column, we obtain
\begin{equation}\label{eq:expqnqnj}
q_{n}(z)=(z-a_{1}^{(0)})\,q_{n,1}(z)-\sum_{j=1}^{p} a_{1}^{(j)} q_{n,j+1}(z),
\end{equation}
where $q_{n,p+1}(z):=\det((zI_{n}-H_{n})^{[p+1]})$, and the other polynomials are defined in \eqref{polsol}.

Recall that we use the following notation. For the Laurent series $f(z)=\sum_{k\in\mathbb{Z}} c_{k}\,z^{-k}$, we write $c_{k}=[f]_{k}$. We first prove \eqref{rel:phis:1} by showing that both sides of
\begin{equation}\label{eq:Laur}
(z-a_{1}^{(0)})\,\phi_{1}(z)-\sum_{j=1}^{p} a_{1}^{(j)}\,\phi_{1,j}(z)\,\phi_{1}(z)=1
\end{equation}
have the same Laurent series at infinity. 

First, observe that the vector
\[
\left(\frac{q_{n,2}}{q_{n,1}},\frac{q_{n,3}}{q_{n,1}},\ldots,\frac{q_{n,p+1}}{q_{n,1}}\right)
\]
is an Hermite-Pad\'{e} approximant of the system of functions $(\phi_{1,1},\ldots,\phi_{1,p})$. By Theorem~\ref{theo:Kal}, we can write
\begin{align*}
q_{n}(z)\,\phi_{j}(z)-q_{n,j}(z) & =\varepsilon_{n,j}(z)\\
q_{n,1}(z)\,\phi_{1,j}(z)-q_{n,j+1}(z) & =\rho_{n,j}(z)
\end{align*} 
where for each $k\geq 0$ fixed, $[\varepsilon_{n,j}]_{k}, [\rho_{n,j}]_{k}$ are zero for all $n$ large enough. Therefore,
\begin{equation}\label{eq:HPprop:2}
\begin{aligned}
& [\phi_{j}]_{k}=\left[\frac{q_{n,j}}{q_{n}}+\frac{\varepsilon_{n,j}}{q_{n}}\right]_{k}=\left[\frac{q_{n,j}}{q_{n}}\right]_{k}\\
& [\phi_{1,j}]_{k}=\left[\frac{q_{n,j+1}}{q_{n,1}}+\frac{\rho_{n,j}}{q_{n,1}}\right]_{k}=\left[\frac{q_{n,j+1}}{q_{n,1}}\right]_{k} 
\end{aligned}
\end{equation}
for each fixed $k\geq 0$ and all $n$ large enough.

Dividing \eqref{eq:expqnqnj} by $q_{n}$, we have
\[
(z-a_{1}^{(0)})\,\frac{q_{n,1}(z)}{q_{n}(z)}-\sum_{j=1}^{p}a_{1}^{(j)}\,\frac{q_{n,j+1}(z)}{q_{n,1}(z)}\frac{q_{n,1}(z)}{q_{n}(z)}=1,
\]
which together with \eqref{eq:HPprop:2} implies \eqref{eq:Laur}.

The relation \eqref{rel:phis:2} is then equivalent to $\phi_{j}=\phi_{1,j-1}\,\phi_{1}$, $2\leq j\leq p$.  Writing $\frac{q_{n,j}}{q_{n}}=\frac{q_{n,j}}{q_{n,1}}\frac{q_{n,1}}{q_{n}}$, we have, for all $n$ large enough,
\[
[\phi_{j}]_{k}=\left[\frac{q_{n,j}}{q_{n}}\right]_{k}=\left[\frac{q_{n,j}}{q_{n,1}}\frac{q_{n,1}}{q_{n}}\right]_{k}=[\phi_{1,j-1} \phi_{1}]_{k},
\]
and so $\phi_{j}=\phi_{1,j-1}\,\phi_{1}$.
\end{proof}

For $n\in\mathbb{Z}_{\geq 0}$, we define the set
\begin{equation}\label{def:Cn}
C(n):=\{\mathbf{k}=(k_{0},\ldots,k_{p})\in\mathbb{Z}^{p+1}_{\geq 0}: k_{0}+\cdots+k_{p}=n\}.
\end{equation}
Given $\mathbf{k}=(k_{0},\ldots,k_{p})\in C(n)$, we will use the notation
\[
\binom{n}{\mathbf{k}}:=\frac{n!}{k_{0}!\,k_1!\,\cdots\, k_{p}!}
\]
and the convention
\[
\binom{n}{-1}=\delta_{n,-1}=\begin{cases}
1\quad \mbox{if}\,\,n=-1,\\
0\quad \mbox{if}\,\,n\neq -1.
\end{cases}
\]
The $j$-th component of $\mathbf{k}$ will be denoted $\mathbf{k}(j)$. In addition to the functions in \eqref{def:resolvphi1j}, we define $\phi_{1,0}\equiv 1$.

For our analysis later, we need the following simple consequence of Lemma~\ref{lem:relfuncphi}.

\begin{corollary}
For a vector $(r_{1},\ldots,r_{p})\in\mathbb{Z}_{\geq 0}^{p}$, we have for all $z$ large enough the relation
\begin{equation}\label{eq:phiphi1}
\prod_{j=1}^{p}\phi_{j}(z)^{r_{j}}=\sum_{n=0}^{\infty}\sum_{\mathbf{k}\in C(n)}\frac{1}{z^{n+r}}
\binom{n+r-1}{r-1}\binom{n}{\mathbf{k}}\Big(\prod_{j=0}^{p} (a_{1}^{(j)})^{\mathbf{k}(j)}\Big)\prod_{j=1}^{p}\phi_{1,j}(z)^{\mathbf{k}(j)+r_{j+1}}
\end{equation}
where $r_{p+1}=0$ and $r=\sum_{j=1}^{p} r_{j}$.
\end{corollary}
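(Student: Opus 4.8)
The plan is to reduce everything to the two identities of Lemma~\ref{lem:relfuncphi} and then expand the resulting expression as a Laurent series at infinity. Set
$$D(z) := z - a_{1}^{(0)} - \sum_{k=1}^{p} a_{1}^{(k)}\,\phi_{1,k}(z),$$
which is the common denominator in \eqref{rel:phis:1}--\eqref{rel:phis:2}; since $\phi_{1,0}\equiv 1$ this can be written symmetrically as $D(z) = z - \sum_{k=0}^{p} a_{1}^{(k)}\,\phi_{1,k}(z)$. By Lemma~\ref{lem:relfuncphi} we have $\phi_{1} = 1/D$ and $\phi_{j} = \phi_{1,j-1}/D$ for $2\leq j\leq p$, so multiplying these out, shifting the index $j\mapsto j-1$ in the second product, and using the convention $r_{p+1}=0$ to append a trivial factor $\phi_{1,p}^{0}$, I obtain
$$\prod_{j=1}^{p}\phi_{j}(z)^{r_{j}} = \frac{1}{D(z)^{r}}\prod_{j=1}^{p}\phi_{1,j}(z)^{r_{j+1}}, \qquad r=\sum_{j=1}^{p} r_{j}.$$

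Next I would factor $z$ out of $D$, writing $D(z) = z\,(1 - u(z))$ with $u(z) = \frac{1}{z}\sum_{k=0}^{p} a_{1}^{(k)}\phi_{1,k}(z)$. By the analogue of \eqref{eq:Laurserphij} for $\mathcal{H}_{1}$, each $\phi_{1,k}$ is analytic and bounded near infinity, so $u(z)\to 0$ as $z\to\infty$ and the generalized binomial series
$$\frac{1}{D(z)^{r}} = \frac{1}{z^{r}}\,(1-u(z))^{-r} = \frac{1}{z^{r}}\sum_{m=0}^{\infty}\binom{m+r-1}{r-1}\,u(z)^{m}$$
converges absolutely for all $z$ large enough. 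I would then expand each power $u(z)^{m} = z^{-m}\bigl(\sum_{k=0}^{p} a_{1}^{(k)}\phi_{1,k}(z)\bigr)^{m}$ by the multinomial theorem, obtaining a sum over $\mathbf{k}\in C(m)$ with weight $\binom{m}{\mathbf{k}}\prod_{k=0}^{p}(a_{1}^{(k)})^{\mathbf{k}(k)}\prod_{k=1}^{p}\phi_{1,k}(z)^{\mathbf{k}(k)}$, where once more $\phi_{1,0}\equiv 1$ removes the $k=0$ factor from the product of Weyl functions.

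Finally I would substitute this expansion back, multiply by the extra factor $\prod_{j=1}^{p}\phi_{1,j}^{r_{j+1}}$, and merge the two products of $\phi_{1,j}$'s into a single product with exponents $\mathbf{k}(j)+r_{j+1}$; renaming $m$ as $n$ then produces \eqref{eq:phiphi1} exactly. The degenerate case $r=0$ (all $r_{j}=0$) is covered automatically by the convention $\binom{n}{-1}=\delta_{n,-1}$, which forces only the $n=0$ term to survive and returns the empty product $1$ on both sides. I do not anticipate a genuine obstacle: the only points needing care are the absolute convergence of the binomial and multinomial series for large $z$, which legitimizes collapsing the iterated sums into the single double sum of \eqref{eq:phiphi1}, and the index bookkeeping in the shift $j\mapsto j-1$ and in aligning the two $\phi_{1,j}$ products — both routine once the conventions $\phi_{1,0}\equiv 1$ and $r_{p+1}=0$ are used to make the ranges match.
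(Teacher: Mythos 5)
Your proposal is correct and follows essentially the same route as the paper: reduce via Lemma~\ref{lem:relfuncphi} to $\prod_j\phi_j^{r_j}=D(z)^{-r}\prod_j\phi_{1,j}^{r_{j+1}}$, expand $D(z)^{-r}$ by the negative binomial series (your $(1-u)^{-r}$ form is just the paper's identity $(z-\rho)^{-r}=\sum_n\binom{n+r-1}{r-1}\rho^n z^{-n-r}$ rewritten), and then apply the multinomial theorem over $C(n)$. The index bookkeeping and the handling of the $r=0$ case via $\binom{n}{-1}=\delta_{n,-1}$ are all consistent with the paper's argument.
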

\begin{proof}
According to \eqref{rel:phis:1}--\eqref{rel:phis:2},
\[
\prod_{j=1}^{p}\phi_{j}(z)^{r_{j}}=\frac{\prod_{j=2}^{p}\phi_{1,j-1}(z)^{r_{j}}}{(z-\sum_{k=0}^{p} a_{1}^{(k)}\,\phi_{1,k}(z))^{\sum_{j=1}^{p}r_{j}}}.
\] 
Let $r=\sum_{j=1}^{p} r_{j}$. Using the identity $(z-\rho)^{-r}=\sum_{n=0}^{\infty}\binom{n+r-1}{r-1}\,\rho^{n}\,z^{-n-r}$, we obtain
\begin{align*}
\frac{1}{(z-\sum_{k=0}^{p} a_{1}^{(k)}\,\phi_{1,k}(z))^{r}}& =\sum_{n=0}^{\infty}\binom{n+r-1}{r-1}\frac{(\sum_{k=0}^{p}a_{1}^{(k)}\,\phi_{1,k}(z))^{n}}{z^{n+r}}\\
& =\sum_{n=0}^{\infty}\sum_{\mathbf{k}\in C(n)}\frac{1}{z^{n+r}}\binom{n+r-1}{r-1}\binom{n}{\mathbf{k}}\prod_{j=0}^{p}(a^{(j)}_{1}\,\phi_{1,j}(z))^{\mathbf{k}(j)}
\end{align*}
and the result follows. 
\end{proof}

\section{Two-sided operator}\label{sec:tsop}

In this section we consider banded Hessenberg operators on $\ell^{2}(\mathbb{Z})$, and obtain a connection formula between Weyl functions of such operators and certain restriction operators on $\ell^{2}(\mathbb{N})$. In the case of Jacobi operators (the case $p=1$), relations between the spectral properties of one-sided and two-sided operators have been extensively investigated. We mention in this area the pioneering work of Nikishin \cite{Nik2}. Other important and more recent works are for example \cite{MasRep,DKS,DaiIsmWang}, see also \cite{Simon} and references therein.

Consider $p+1$ deterministic bounded sequences of complex numbers $(a_{n}^{(k)})_{n\in\mathbb{Z}}$, $0\leq k\leq p$. Let $\{\hat{e}_{n}\}_{n\in\mathbb{Z}}$ be the standard basis in the space $\ell^{2}(\mathbb{Z})$, and let $\mathcal{M}$ be the bounded operator on $\ell^{2}(\mathbb{Z})$ that acts on the standard basis vectors as follows: 
\begin{equation}\label{def:opB}
\mathcal{M}\hat{e}_{n}=\hat{e}_{n-1}+\sum_{k=0}^{p} a_{n}^{(k)} \hat{e}_{n+k},\qquad n\in\mathbb{Z}.
\end{equation}
The matrix representation of $\mathcal{M}$ in the basis $\{\hat{e}_{n}\}_{n\in\mathbb{Z}}$ is the bi-infinite matrix $M=(m_{i,j})_{i,j\in\mathbb{Z}}$ with entries
\begin{equation}\label{def:matrixB}
\begin{cases}
m_{j-1,j}=1, & j\in\mathbb{Z},\\
m_{j+k,j}=a_{j}^{(k)}, & 0\leq k\leq p, \quad j\in\mathbb{Z},\\
m_{i,j}=0, & \mbox{otherwise}.
\end{cases}
\end{equation}

Let $r\in\mathbb{Z}$ be fixed. If we focus on the entries $m_{i,j}$ of the matrix $M$ with $i,j\geq r+1$, the resulting submatrix is associated with the operator $\mathcal{H}_{r}^{+}$ on $\ell^{2}(\mathbb{N})$ defined by
\begin{equation}\label{def:opHp}
\begin{cases}
\mathcal{H}^{+}_{r} e_{1}=\sum_{k=0}^{p} a_{r+1}^{(k)}\,e_{k+1},\\
\mathcal{H}^{+}_{r} e_{n}=e_{n-1}+\sum_{k=0}^{p} a_{n+r}^{(k)}\,e_{n+k}, & n\geq 2. 
\end{cases}
\end{equation}
So the matrix representation of $\mathcal{H}_{r}^{+}$ in the basis $\{e_{n}\}_{n=1}^{\infty}$ is
\[
\begin{pmatrix}
a_{r+1}^{(0)} & 1 & & & \\
\vdots & a_{r+2}^{(0)} & \ddots \\
a_{r+1}^{(p)} & \vdots & \ddots & \\
 & a_{r+2}^{(p)} & & \\
 &  & \ddots & \\
 & & &   
\end{pmatrix}.
\]
Similarly, for $r\in\mathbb{Z}$, if we restrict ourselves to the entries $m_{i,j}$ of the matrix $M$ with $i,j\leq r-1$, the resulting matrix is linked to the operator $\mathcal{H}_{r}^{-}$ on $\ell^{2}(\mathbb{N})$ defined by
\begin{equation}\label{def:opHm}
\begin{cases}
\mathcal{H}^{-}_{r} e_{1}=\sum_{k=0}^{p} a_{r-1-k}^{(k)}\,e_{k+1},\\
\mathcal{H}^{-}_{r} e_{n}=e_{n-1}+\sum_{k=0}^{p} a_{r-n-k}^{(k)}\,e_{n+k}, & n\geq 2. 
\end{cases}
\end{equation}
So the matrix representation of $\mathcal{H}_{r}^{-}$ in the basis $\{e_{n}\}_{n=1}^{\infty}$ is
\[
\begin{pmatrix}
a_{r-1}^{(0)} & 1 & & & \\
\vdots & a_{r-2}^{(0)} & \ddots \\
a_{r-p-1}^{(p)} & \vdots & \ddots & \\
 & a_{r-p-2}^{(p)} & & \\
 &  & \ddots & \\
 & & &   
\end{pmatrix}.
\]
The corresponding resolvent functions are
\begin{equation}\label{def:resolfunc}
\phi_{r,k}^{\pm}(z):=\langle(zI-\mathcal{H}_{r}^{\pm})^{-1} e_{k},e_{1}\rangle,\quad 1\leq k\leq p.
\end{equation}
We also set, by definition, $\phi_{r,0}^{\pm}(z)\equiv 1$.

We discuss now how certain resolvent functions associated with the operator $\mathcal{M}$ defined in \eqref{def:opB} can be approximated by certain rational functions built with characteristic polynomials.

For $n\geq 0$, let $M_{2n+1}$ be the submatrix of $M$ of size $(2n+1)\times (2n+1)$ with entries $m_{i,j}$, $-n\leq i,j\leq n$. The entry at the center of this matrix is $m_{0,0}=a^{(0)}_{0}$. In accordance with the notation used in \eqref{lem:linalg}, we define the polynomials
\begin{align*}
Q_{2n+1}(z) & :=\det(z I_{2n+1}-M_{2n+1}),\\
Q_{\ell}^{+}(z) & :=\det((z I_{2n+1}-M_{2n+1})^{[2n+1-\ell]}),\quad 1\leq \ell\leq 2n,\\
Q_{\ell}^{-}(z) & :=\det((z I_{2n+1}-M_{2n+1})_{[2n+1-\ell]}),\quad 1\leq \ell\leq 2n,
\end{align*} 
so note that $Q_{\ell}^{\pm}$ are polynomials of degree $\ell$. As before, we set $Q_{0}^{\pm}\equiv 1$, and $Q_{\ell}^{\pm}\equiv 0$ for $\ell\leq -1$. Applying Lemma~\ref{lem:linalg} to the matrix $M_{2n+1}$, we obtain
\[
Q_{2n+1}'(z)=\sum_{j=-n}^{n} Q_{n-j}^{+}(z)\,Q_{n+j}^{-}(z)
\]
and for each $-n\leq j\leq n$,
\[
Q_{2n+1}(z)=(z-a_{j}^{(0)})\,Q_{n-j}^{+}(z)\,Q_{n+j}^{-}(z)
-\sum_{\ell=1}^{p}\sum_{k=0}^{\ell} a_{j-k}^{(\ell)}\,Q_{n-j+k-\ell}^{+}(z)\,Q_{n+j-k}^{-}(z).
\]
Hence,
\begin{align}
\frac{Q_{2n+1}'(z)}{Q_{2n+1}(z)} & =\sum_{j=-n}^{n}\frac{Q_{n-j}^{+}(z)\,Q_{n+j}^{-}(z)}{Q_{2n+1}(z)}\notag\\
& =\sum_{j=-n}^{n}\frac{1}{z-a^{(0)}_{j}-\sum_{\ell=1}^{p}\sum_{k=0}^{\ell} a_{j-k}^{(\ell)}\,\frac{Q_{n-j+k-\ell}^{+}(z)}{Q_{n-j}^{+}(z)}\,\frac{Q_{n+j-k}^{-}(z)}{Q_{n+j}^{-}(z)}}.\label{eq:ratioQs}
\end{align}
Let us analyze the quotient $Q_{n-j+k-\ell}^{+}(z)/Q_{n-j}^{+}(z)$. The denominator is the characteristic polynomial of the square matrix
\begin{equation}\label{eq:matcharpol}
\begin{pmatrix}
a_{j+1}^{(0)} & 1 & & & 0 \\
\vdots & \ddots & \ddots \\
a_{j+1}^{(p)} & & \ddots & \ddots & \\
 & \ddots & & \ddots & 1 \\
0 &  & a_{n-p}^{(p)} & \cdots & a_{n}^{(0)}
\end{pmatrix}
\end{equation}
and the numerator is the characteristic polynomial of the submatrix of \eqref{eq:matcharpol} obtained after deleting the first $\ell-k$ rows and columns. Therefore, according to Theorem~\ref{theo:Kal}, we can view the function $Q_{n-j+k-\ell}^{+}(z)/Q_{n-j}^{+}(z)$ as an approximation of the resolvent function $\phi_{j,\ell-k}^{+}(z)$ associated with the operator $\mathcal{H}_{j}^{+}$. In virtue of \eqref{eq:HPprop}, we have
\begin{equation}\label{eq:estinf:1}
\phi_{j,\ell-k}^{+}(z)-\frac{Q_{n-j+k-\ell}^{+}(z)}{Q_{n-j}^{+}(z)}=O\left(\frac{1}{z^{n-j+2+\lfloor(n-j+k-\ell)/p\rfloor}}\right),\quad z\rightarrow\infty.
\end{equation}
Similarly, we can view $Q_{n+j-k}^{-}(z)/Q_{n+j}^{-}(z)$ as an approximation of the resolvent function $\phi_{j,k}^{-}(z)$ associated with the operator $\mathcal{H}_{j}^{-}$, and according to \eqref{eq:HPprop} we have
\begin{equation}\label{eq:estinf:2}
\phi_{j,k}^{-}(z)-\frac{Q_{n+j-k}^{-}(z)}{Q_{n+j}^{-}(z)}=O\left(\frac{1}{z^{n+j+2+\lfloor(n+j-k)/p\rfloor}}\right),\quad z\rightarrow\infty.
\end{equation}

Consider now the function
\begin{equation}\label{def:funcwj}
w_{j}(z):=\frac{1}{z-a^{(0)}_{j}-\sum_{\ell=1}^{p}\sum_{k=0}^{\ell} a_{j-k}^{(\ell)}\,\phi_{j,\ell-k}^{+}(z)\,\phi_{j,k}^{-}(z)},
\end{equation}
which we will compare with $\frac{Q_{n-j}^{+}(z)Q_{n+j}^{-}(z)}{Q_{2n+1}(z)}$, see \eqref{eq:ratioQs}. Applying \eqref{eq:estinf:1} and \eqref{eq:estinf:2}, we obtain for each $-n\leq j\leq n$ the estimate
\begin{gather}
\frac{Q_{n-j}^{+}(z)Q_{n+j}^{-}(z)}{Q_{2n+1}(z)}-w_{j}(z)\notag\\
=\frac{\sum_{\ell=1}^{p}\sum_{k=0}^{\ell} a_{j-k}^{(\ell)}
\left(\frac{Q_{n-j+k-\ell}^{+}(z)}{Q_{n-j}^{+}(z)}
\frac{Q_{n+j-k}^{-}(z)}{Q_{n+j}^{-}(z)}-\phi_{j,\ell-k}^{+}(z)\phi_{j,k}^{-}(z)\right)}{\left(z-a^{(0)}_{j}-\sum_{\ell}\sum_{k}a_{j-k}^{(\ell)}\,\frac{Q_{n-j+k-\ell}^{+}(z)Q_{n+j-k}^{-}(z)}{Q_{n-j}^{+}(z)Q_{n+j}^{-}(z)}\right)\left(z-a^{(0)}_{j}-\sum_{\ell}\sum_{k}a_{j-k}^{(\ell)}\,\phi_{j,\ell-k}^{+}(z)\phi_{j,k}^{-}(z)\right)}\notag\\
=O\left(\frac{1}{z^{n-|j|+3+\lfloor(n-|j|)/p\rfloor}}\right).\label{eq:estinf:3}
\end{gather}
Indeed, the denominator in the second expression in \eqref{eq:estinf:3} gives the contribution $O(z^{-2})$, and we have
\begin{align*}
\frac{Q_{n-j+k-\ell}^{+}}{Q_{n-j}^{+}}\frac{Q_{n+j-k}^{-}}{Q_{n+j}^{-}}-\phi_{j,\ell-k}^{+}\,\phi_{j,k}^{-} & =\frac{Q_{n+j-k}^{-}}{Q_{n+j}^{-}}\left(\frac{Q_{n-j+k-\ell}^{+}}{Q_{n-j}^{+}}-\phi_{j,\ell-k}^{+}\right)+\phi_{j,\ell-k}^{+}\left(\frac{Q_{n+j-k}^{-}}{Q_{n+j}^{-}}-\phi_{j,k}^{-}\right)\\
& =O\left(\frac{1}{z^{k+n-j+2+\lfloor(n-j+k-\ell)/p\rfloor}}\right)
+O\left(\frac{1}{z^{\ell-k+n+j+2+\lfloor(n+j-k)/p\rfloor}}\right)\\
& =O\left(\frac{1}{z^{n-|j|+1+\lfloor(n-|j|)/p\rfloor}}\right)
\end{align*} 
and \eqref{eq:estinf:3} follows.

If $M_{2n}$ is now the submatrix of $M$ of size $2n \times 2n$ with entries $m_{i,j}$, $-n+1\leq i,j\leq n$, and we let
\begin{align*}
Q_{2n}(z) & :=\det(z I_{2n}-M_{2n}),\\
Q_{\ell}^{+}(z) & :=\det((z I_{2n}-M_{2n})^{[2n-\ell]}),\quad 1\leq \ell\leq 2n,\\
Q_{\ell}^{-}(z) & :=\det((z I_{2n}-M_{2n})_{[2n-\ell]}),\quad 1\leq \ell\leq 2n,
\end{align*}
then similar computations yield
\begin{align*}
\frac{Q_{2n}'(z)}{Q_{2n}(z)} & =\sum_{j=-n+1}^{n}\frac{Q_{n-j}^{+}(z)\,Q_{n+j-1}^{-}(z)}{Q_{2n}(z)}\\
& =\sum_{j=-n+1}^{n}\frac{1}{z-a^{(0)}_{j}-\sum_{\ell=1}^{p}\sum_{k=0}^{\ell} a_{j-k}^{(\ell)}\,\frac{Q_{n-j+k-\ell}^{+}(z)}{Q_{n-j}^{+}(z)}\,\frac{Q_{n+j-k-1}^{-}(z)}{Q_{n+j-1}^{-}(z)}}
\end{align*}
and for every $-n+1\leq j\leq n$, we have
\begin{equation}\label{eq:estinf:4}
\frac{Q_{n-j}^{+}(z)\,Q_{n+j-1}^{-}(z)}{Q_{2n}(z)}-w_{j}(z)=O\left(\frac{1}{z^{n-|j|+1+\lfloor(n-|j|)/p\rfloor}}\right).
\end{equation}

\begin{lemma}
For all $j\in\mathbb{Z}$,
\begin{equation}\label{eq:idwj}
w_{j}(z)=\langle(zI-\mathcal{M})^{-1}\hat{e}_{j},\hat{e}_{j}\rangle,\qquad |z|>\|\mathcal{M}\|.
\end{equation}
\end{lemma}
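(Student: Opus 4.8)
The plan is to identify $w_{j}(z)$ as the diagonal resolvent entry $\langle(zI-\mathcal{M})^{-1}\hat{e}_{j},\hat{e}_{j}\rangle$ by showing that the two sides have identical Laurent expansions at infinity, coefficient by coefficient, and then invoking analyticity for $|z|>\|\mathcal{M}\|$. The right-hand side is a legitimate analytic function there, with Laurent series $\sum_{m=0}^{\infty}\langle\mathcal{M}^{m}\hat{e}_{j},\hat{e}_{j}\rangle\,z^{-m-1}$ converging absolutely for $|z|>\|\mathcal{M}\|$; so it suffices to match the two at the level of formal Laurent coefficients. The key mechanism for doing this is already assembled in the excerpt: for each fixed $m\geq 0$, the quantities $[w_{j}]_{m}$ and $[\langle(zI-\mathcal{M})^{-1}\hat{e}_{j},\hat{e}_{j}\rangle]_{m}$ can both be computed as limits (in fact, eventual stabilizations) of the finite-truncation quantity $\dfrac{Q_{n-j}^{+}(z)\,Q_{n+j}^{-}(z)}{Q_{2n+1}(z)}$ as $n\to\infty$.

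First I would observe that $\dfrac{Q_{n-j}^{+}(z)\,Q_{n+j}^{-}(z)}{Q_{2n+1}(z)}$ equals the diagonal resolvent entry of the finite truncation, namely $(zI_{2n+1}-M_{2n+1})^{-1}(j,j)$ in the centered indexing, by the formula \eqref{eq:traceresol:1} of Lemma~\ref{lem:linalg}. Expanding this finite-matrix resolvent at infinity gives $\sum_{m=0}^{\infty}(M_{2n+1}^{m})(j,j)\,z^{-m-1}$, and since $M_{2n+1}$ is the central truncation of the banded matrix $M$, the power $(M_{2n+1}^{m})(j,j)$ agrees with $\langle\mathcal{M}^{m}\hat{e}_{j},\hat{e}_{j}\rangle$ for every $n$ large enough relative to $m$ (a walk of length $m$ starting and ending at index $j$ cannot reach indices beyond $j\pm m\cdot p$, so once $n$ exceeds $|j|+mp$ the truncation does not affect the walk count). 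Hence for each fixed $m$,
\begin{equation}\label{eq:matchM}
\left[\langle(zI-\mathcal{M})^{-1}\hat{e}_{j},\hat{e}_{j}\rangle\right]_{m}
=\left[\frac{Q_{n-j}^{+}\,Q_{n+j}^{-}}{Q_{2n+1}}\right]_{m}\qquad\text{for all $n$ large.}
\end{equation}

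On the other side, the estimate \eqref{eq:estinf:3} says precisely that $\dfrac{Q_{n-j}^{+}\,Q_{n+j}^{-}}{Q_{2n+1}}-w_{j}=O(z^{-(n-|j|+3+\lfloor(n-|j|)/p\rfloor)})$, so for each fixed $m$ the coefficient $[w_{j}]_{m}$ coincides with $\big[\tfrac{Q_{n-j}^{+}Q_{n+j}^{-}}{Q_{2n+1}}\big]_{m}$ as soon as $n$ is large enough that the error exponent exceeds $m+1$. Combining this with \eqref{eq:matchM} yields $[w_{j}]_{m}=[\langle(zI-\mathcal{M})^{-1}\hat{e}_{j},\hat{e}_{j}\rangle]_{m}$ for every $m\geq 0$, whence the two analytic functions agree on $|z|>\|\mathcal{M}\|$. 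The main obstacle to watch is the bookkeeping that makes \eqref{eq:matchM} rigorous: one must confirm that the truncation $M_{2n+1}$ genuinely reproduces all walks of a given bounded length through index $j$, and that the definitions of $\phi_{j,\ell-k}^{+}$, $\phi_{j,k}^{-}$ used in $w_{j}$ are exactly the Weyl functions of the half-line operators $\mathcal{H}_{j}^{\pm}$ obtained by splitting $\mathcal{M}$ at $j$ — i.e. that the algebraic identity \eqref{eq:ratioQs} passed to the limit recovers the continued-fraction form \eqref{def:funcwj}. Both are routine once the centered indexing is fixed, but they carry the content of the argument. (The even-sized truncations $M_{2n}$ with the analogous estimate \eqref{eq:estinf:4} give an independent confirmation and can be used interchangeably.)
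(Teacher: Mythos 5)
Your proposal is correct and follows essentially the same route as the paper's proof: both identify $\frac{Q_{n-j}^{+}Q_{n+j}^{-}}{Q_{2n+1}}$ with the $(j,j)$ resolvent entry of the truncation via \eqref{eq:traceresol:1}, use the banded-walk stabilization $M_{2n+1}^{s}(j,j)=\langle\mathcal{M}^{s}\hat{e}_{j},\hat{e}_{j}\rangle$ for $n$ large, and use \eqref{eq:estinf:3} to match the coefficients of $w_{j}$, concluding by equality of Laurent series. Your explicit walk-length bookkeeping fills in the step the paper labels as clear; no gaps.
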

\begin{proof}
We check that both functions in \eqref{eq:idwj} have the same Laurent series at infinity. Recall that if $f(z)$ is a Laurent series, we denote by $[f]_{k}$ the coefficient multiplying $z^{-k}$ in the series. 

Let $j\in\mathbb{Z}$ be fixed. Consider the matrix $M_{2n+1}$ defined before in this section. If $n\geq |j|$, by \eqref{eq:traceresol:1} we have
\[
(z I_{2n+1}-M_{2n+1})^{-1}(j,j)=\frac{Q_{n-j}^{+}(z) Q_{n+j}^{-}(z)}{Q_{2n+1}(z)},
\]
hence
\[
\frac{Q_{n-j}^{+}(z) Q_{n+j}^{-}(z)}{Q_{2n+1}(z)}=\sum_{s=0}^{\infty}\frac{M_{2n+1}^{s}(j,j)}{z^{s+1}}.
\]
It follows from \eqref{eq:estinf:3} that
\[
\left[\frac{Q_{n-j}^{+} Q_{n+j}^{-}}{Q_{2n+1}}\right]_{k}=\left[w_{j}\right]_{k},\qquad 0\leq k\leq n-|j|+2+\lfloor(n-|j|)/p\rfloor.
\]
On the other hand, we have
\[
\langle(zI-\mathcal{M})^{-1}\hat{e}_{j},\hat{e}_{j}\rangle=\sum_{s=0}^{\infty}\frac{\langle\mathcal{M}^{s}\hat{e}_{j},\hat{e}_{j}\rangle}{z^{s+1}},\qquad |z|>\|\mathcal{M}\|.
\]
It is clear that for each fixed $s\geq 0$, 
\[
\langle\mathcal{M}^{s}\hat{e}_{j},\hat{e}_{j}\rangle=M_{2n+1}^{s}(j,j)=\left[\frac{Q_{n-j}^{+} Q_{n+j}^{-}}{Q_{2n+1}}\right]_{s+1}
\]
for all $n$ large enough. We conclude that $\langle\mathcal{M}^{s}\hat{e}_{j},\hat{e}_{j}\rangle=[w_{j}]_{s+1}$ for all $s\geq 0$, so \eqref{eq:idwj} follows.
\end{proof}

In what follows we need some definitions. Given an integer $r\in\mathbb{Z}_{\geq 0}$, let
\[
\widehat{C}(r):=\{\mathbf{k}=(k_{1},\ldots,k_{\frac{(p+1)(p+2)}{2}})\in\mathbb{Z}_{\geq 0}^{\frac{(p+1)(p+2)}{2}}: k_{1}+\cdots+k_{\frac{(p+1)(p+2)}{2}}=r\}.
\]
For $\mathbf{k}\in \widehat{C}(r)$, we will use the notation
\[
\binom{r}{\mathbf{k}}=\frac{r!}{k_{1}!\,k_{2}!\,\cdots\,k_{\frac{(p+1)(p+2)}{2}}!}.
\]
The $i$-th component of $\mathbf{k}$ will be denoted $\mathbf{k}(i)$. Given 
$\mathbf{k}\in \widehat{C}(r)$ and $1\leq j\leq p$, we define
\begin{equation}\label{def:alphabeta:1}
\begin{aligned}
\alpha(\mathbf{k},j) & :=\sum_{t=1}^{p-j+1}\mathbf{k}\Big(\frac{(j+t)(j+t+1)}{2}-j\Big),\\
\beta(\mathbf{k},j) & :=\sum_{t=1}^{p-j+1}\mathbf{k}\Big(\frac{(j+t-1)(j+t)}{2}+j+1\Big),
\end{aligned}
\end{equation}
and the vectors
\begin{equation}\label{def:alphabeta:2}
\begin{aligned}
\alpha(\mathbf{k}) & :=(\alpha(\mathbf{k},1),\alpha(\mathbf{k},2),\ldots,\alpha(\mathbf{k},p)), \\
\beta(\mathbf{k}) & :=(\beta(\mathbf{k},1),\beta(\mathbf{k},2),\ldots,\beta(\mathbf{k},p)).
\end{aligned}
\end{equation}

Let us also introduce the functions
\[
f_{k,\ell}:=a_{-k}^{(\ell)}\,\phi_{0,\ell-k}^{+}\,\phi_{0,k}^{-},\qquad 0\leq \ell\leq p,\quad 0\leq k\leq \ell.
\]
Note that there are a total of $(p+1)(p+2)/2$ such functions. Recall that by definition, $\phi_{0,0}^{+}(z)\equiv \phi_{0,0}^{-}(z)\equiv 1$.

\begin{lemma}
The following formula holds:
\begin{equation}\label{eq:w0}
w_{0}(z)=\sum_{r=0}^{\infty}\sum_{\mathbf{k}\in \widehat{C}(r)}\frac{1}{z^{r+1}}\binom{r}{\mathbf{k}}\,\prod_{0\leq s\leq \ell\leq p} (a^{(\ell)}_{-s})^{\mathbf{k}(\frac{\ell(\ell+1)}{2}+1+s)}\prod_{j=1}^{p}\left\{\phi_{0,j}^{+}(z)^{\alpha(\mathbf{k},j)}\,\phi_{0,j}^{-}(z)^{\beta(\mathbf{k},j)}\right\}.
\end{equation}
\end{lemma}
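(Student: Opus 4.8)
The plan is to expand the function $w_{0}(z)$ starting from its defining expression~\eqref{def:funcwj} specialized to $j=0$, namely
\[
w_{0}(z)=\frac{1}{z-a_{0}^{(0)}-\sum_{\ell=1}^{p}\sum_{k=0}^{\ell} a_{-k}^{(\ell)}\,\phi_{0,\ell-k}^{+}(z)\,\phi_{0,k}^{-}(z)},
\]
and to recognize the denominator, apart from the leading $z$, as a sum of exactly $(p+1)(p+2)/2$ functions. Writing $\rho(z)=a_{0}^{(0)}+\sum_{\ell=1}^{p}\sum_{k=0}^{\ell} a_{-k}^{(\ell)}\,\phi_{0,\ell-k}^{+}\,\phi_{0,k}^{-}=\sum_{0\le k\le\ell\le p} f_{k,\ell}$, the functions $f_{k,\ell}$ being precisely those introduced just before the statement, the first step is the geometric/binomial expansion
\[
w_{0}(z)=\frac{1}{z-\rho(z)}=\sum_{r=0}^{\infty}\frac{\rho(z)^{r}}{z^{r+1}},
\]
valid for $|z|>\|\mathcal{M}\|$ since each $\phi_{0,k}^{\pm}(z)=O(1/z^{k})$ and the entries $a_{-k}^{(\ell)}$ are bounded, so $\rho(z)=O(1/z)$ and the series converges.

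Next I would apply the multinomial theorem to $\rho(z)^{r}=\big(\sum_{0\le k\le\ell\le p} f_{k,\ell}\big)^{r}$. The natural index set for the resulting multinomial is $\widehat{C}(r)$, since there are exactly $(p+1)(p+2)/2$ summands $f_{k,\ell}$; I must fix a bijection between the pairs $(k,\ell)$ with $0\le k\le \ell\le p$ and the coordinate positions $1,\ldots,(p+1)(p+2)/2$ of a vector $\mathbf{k}\in\widehat{C}(r)$. The indexing forced by the statement assigns the pair $(s,\ell)$ (written there with $k\to s$) to position $\frac{\ell(\ell+1)}{2}+1+s$; one checks this runs over all positions exactly once as $\ell$ ranges over $0,\ldots,p$ and $s$ over $0,\ldots,\ell$. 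With this convention,
\[
\rho(z)^{r}=\sum_{\mathbf{k}\in\widehat{C}(r)}\binom{r}{\mathbf{k}}\prod_{0\le s\le\ell\le p} f_{s,\ell}^{\,\mathbf{k}(\frac{\ell(\ell+1)}{2}+1+s)}.
\]
Substituting $f_{s,\ell}=a_{-s}^{(\ell)}\,\phi_{0,\ell-s}^{+}\,\phi_{0,s}^{-}$ separates the coefficient part $\prod (a_{-s}^{(\ell)})^{\mathbf{k}(\cdots)}$, which already matches the target formula, from a product of resolvent functions $\prod \big(\phi_{0,\ell-s}^{+}\big)^{\mathbf{k}(\cdots)}\big(\phi_{0,s}^{-}\big)^{\mathbf{k}(\cdots)}$.

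The remaining and main step is purely bookkeeping: collecting the exponents of each $\phi_{0,j}^{+}$ and each $\phi_{0,j}^{-}$ over all factors in the product. For fixed $j$ with $1\le j\le p$, the superdiagonal factor $\phi_{0,j}^{+}$ arises from those pairs $(s,\ell)$ with $\ell-s=j$, i.e. $\ell=j+s$ with $s=0,\ldots,p-j$; reindexing by $t=s+1$ gives exactly the positions $\frac{(j+t)(j+t+1)}{2}-j$ appearing in the definition of $\alpha(\mathbf{k},j)$, so the total exponent of $\phi_{0,j}^{+}$ is $\alpha(\mathbf{k},j)$. Likewise $\phi_{0,j}^{-}$ arises from pairs with $s=j$, i.e. $\ell=j,\ldots,p$, and reindexing these by $t$ produces the positions $\frac{(j+t-1)(j+t)}{2}+j+1$ defining $\beta(\mathbf{k},j)$, giving total exponent $\beta(\mathbf{k},j)$; note the $\phi_{0,0}^{\pm}\equiv 1$ factors (from $\ell=s$ and from $s=0$) contribute nothing, consistent with restricting $j\ge 1$. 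I expect the one genuine obstacle to be verifying these two index identities---that the combinatorial positions $\frac{(j+t)(j+t+1)}{2}-j$ and $\frac{(j+t-1)(j+t)}{2}+j+1$ really enumerate, without gaps or repetitions, the correct factors in the multinomial expansion. This is a careful but elementary check on triangular-number arithmetic; once it is confirmed, inserting the factor $z^{-(r+1)}$ from the geometric expansion and reassembling yields~\eqref{eq:w0} directly.
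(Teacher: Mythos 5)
Your proposal is correct and follows essentially the same route as the paper: the geometric expansion of $1/(z-\rho(z))$, the multinomial theorem over $\widehat{C}(r)$ with the labeling $(s,\ell)\mapsto \frac{\ell(\ell+1)}{2}+1+s$, and the same bookkeeping of exponents along the superdiagonals (for $\phi_{0,j}^{+}$) and rows (for $\phi_{0,j}^{-}$) of the triangular array of the $f_{k,\ell}$. The only quibble is that $\rho(z)=a_{0}^{(0)}+O(1/z)$ rather than $O(1/z)$, since $f_{0,0}=a_{0}^{(0)}$ is constant, but this does not affect convergence for $|z|$ large.
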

\begin{proof}
We have
\[
w_{0}(z)=\frac{1}{z-\sum_{0\leq k\leq \ell\leq p} f_{k,\ell}(z)}=\sum_{r=0}^{\infty}\frac{\left(\sum_{0\leq k\leq \ell\leq p} f_{k,\ell}(z)\right)^{r}}{z^{r+1}}.
\]
Let $g_{i}(z)$, $1\leq i\leq \frac{(p+1)(p+2)}{2}$, be a relabeling of the functions $f_{k,\ell}(z)$ as indicated below:
\begin{equation}\label{arrayfg}
\begin{array}{ccccc}
f_{0,0} & f_{0,1} & f_{0,2} & \cdots & f_{0,p} \\[0.3em]
 & f_{1,1} & f_{1,2} & \cdots & f_{1,p} \\[0.3em]
  & & f_{2,2} & \cdots & f_{2,p} \\[0.3em]
  & & & \ddots & \vdots\\
  & & & & f_{p,p}
\end{array}
\qquad
\begin{array}{ccccc}
g_{1} & g_{2} & g_{4} & \cdots & g_{\frac{p(p+1)}{2}+1} \\
 & g_{3} & g_{5} & \cdots & g_{\frac{p(p+1)}{2}+2} \\
  & & g_{6} & \cdots & g_{\frac{p(p+1)}{2}+3} \\
  & & & \ddots & \vdots\\
  & & & & g_{\frac{(p+1)(p+2)}{2}}
\end{array}
\end{equation}
that is, we make the identification $f_{k,\ell}=g_{\frac{\ell(\ell+1)}{2}+k+1}$, $0\leq k\leq \ell\leq p$. So
\[
\Bigg(\sum_{0\leq k\leq \ell\leq p} f_{k,\ell}(z)\Bigg)^{r}=\Bigg(\sum_{i=1}^{\frac{(p+1)(p+2)}{2}}g_{i}(z)\Bigg)^{r}=\sum_{\mathbf{k}\in \widehat{C}(r)}\binom{r}{\mathbf{k}}\prod_{i=1}^{\frac{(p+1)(p+2)}{2}}g_{i}(z)^{\mathbf{k}(i)}.
\]
Now we find the power of $\phi_{0,j}^{+}(z)$ that appears in the product $\prod_{i=1}^{\frac{(p+1)(p+2)}{2}}g_{i}(z)^{\mathbf{k}(i)}$. The triangular array \eqref{arrayfg} has $p+1$ diagonals. Observe that the function $\phi_{0,1}^{+}$ appears only in the first superdiagonal of the array, the function $\phi_{0,2}^{+}$ appears only in the second superdiagonal, and so on. For each $1\leq j\leq p$, the function $\phi_{0,j}^{+}$ appears only in the $j$-th superdiagonal of \eqref{arrayfg}, the one that contains the functions 
\[
g_{\frac{j(j+1)}{2}+1},\,\,\,g_{\frac{j(j+1)}{2}+1+(j+2)},\,\,\,g_{\frac{j(j+1)}{2}+1+(j+2)+(j+3)},\,\,\,\ldots
\]
Hence, in the product $\prod_{i=1}^{\frac{(p+1)(p+2)}{2}}g_{i}(z)^{\mathbf{k}(i)}$, the function $\phi_{0,j}^{+}(z)$ appears raised to the power
\[
\sum_{t=1}^{p-j+1}\mathbf{k}\Big(\frac{j(j+1)}{2}+1+(j+2)+\cdots+(j+t)\Big)=\sum_{t=1}^{p-j+1}\mathbf{k}\Big(\frac{(j+t)(j+t+1)}{2}-j\Big)=\alpha(\mathbf{k},j).
\]
Now, the function $\phi_{0,j}^{-}(z)$ appears only in row $j+1$ of the array \eqref{arrayfg}, so in the product $\prod_{i=1}^{\frac{(p+1)(p+2)}{2}}g_{i}(z)^{\mathbf{k}(i)}$ the function $\phi_{0,j}^{-}(z)$ appears raised to the power
\[
\sum_{t=1}^{p-j+1}\mathbf{k}\Big(\frac{(j+1)(j+2)}{2}+(j+1)+\cdots+(j+t-1)\Big)=\sum_{t=1}^{p-j+1}\mathbf{k}\Big(\frac{(j+t-1)(j+t)}{2}+j+1\Big)=\beta(\mathbf{k},j).
\]
It is readily seen that in the product $\prod_{i=1}^{\frac{(p+1)(p+2)}{2}}g_{i}(z)^{\mathbf{k}(i)}$, the coefficient $a_{-s}^{(\ell)}$ appears raised to the power $\mathbf{k}(\frac{\ell(\ell+1)}{2}+1+s)$.

In conclusion, we have
\[
\prod_{i=1}^{\frac{(p+1)(p+2)}{2}}g_{i}(z)^{\mathbf{k}(i)}=\prod_{0\leq s\leq \ell\leq p} (a^{(\ell)}_{-s})^{\mathbf{k}(\frac{\ell(\ell+1)}{2}+1+s)}\prod_{j=1}^{p}\left\{\phi_{0,j}^{+}(z)^{\alpha(\mathbf{k},j)}\,\phi_{0,j}^{-}(z)^{\beta(\mathbf{k},j)}\right\}
\]
and \eqref{eq:w0} follows. 
\end{proof}

\section{Random characteristic polynomials}\label{sec:rcp}

In this section we finally discuss our results in the random setting. Let $\mu_{k}$, $0\leq k\leq p$, be a collection of $p+1$ Borel probability measures with compact support in the complex plane. We consider $p+1$ sequences $(a_{n}^{(k)})_{n\in\mathbb{Z}}$, $0\leq k\leq p$, of complex random variables satisfying the following assumptions:
\begin{itemize}
\item[(a1)] For each $0\leq k\leq p$, the sequence $(a_{n}^{(k)})_{n\in\mathbb{Z}}$ is i.i.d. with distribution $\mu_{k}$.
\item[(a2)] The collection $\{a_{n}^{(k)}: n\in\mathbb{Z},\,\,0\leq k\leq p\}$ is jointly independent.
\item[(a3)] There exists a constant $C>0$ such that $|a_{n}^{(k)}|\leq C$ surely, for all $k$ and $n$.
\end{itemize}
We will keep these three assumptions throughout this section. All definitions given in Section~\ref{sec:tsop} apply.

In what follows we need the function
\begin{equation}\label{resolvW}
W(z):=\langle(zI-\mathcal{M})^{-1}\hat{e}_{0},\hat{e}_{0}\rangle=\frac{1}{z-a_{0}^{(0)}-\sum_{\ell=1}^{p}\sum_{k=0}^{\ell}a_{-k}^{(\ell)}\,\phi_{0,\ell-k}^{+}(z)\,\phi_{0,k}^{-}(z)}.
\end{equation}
This is exactly the function $w_{0}(z)$, see \eqref{def:funcwj}. Clearly, the functions $w_{j}(z)$ in \eqref{def:funcwj} are all analytic in the exterior of the closed disk centered at the origin with radius $\|\mathcal{M}\|<\infty$.   

\begin{theorem}\label{theo:rtso}
Let $(a_{n}^{(k)})_{n\in\mathbb{Z}}$, $0\leq k\leq p$, be $p+1$ sequences of complex-valued random variables satisfying conditions $(a1)$--$(a3)$ above. Let $\mathcal{M}$ be the two-sided operator defined in \eqref{def:opB}, with corresponding matrix $M=(m_{i,j})_{i,j\in\mathbb{Z}}$ defined in \eqref{def:matrixB}, and resolvent function \eqref{resolvW}. Let $M_{n}$ be the submatrix of $M$ with entries $m_{i,j}$, $-\lfloor(n-1)/2\rfloor\leq i,j\leq \lceil(n-1)/2\rceil$, with eigenvalues $\{\lambda_{k,n}\}_{k=1}^{n}$ (counting multiplicities), and let $Q_{n}(z)=\det(z I_{n}-M_{n})$. For each $s\in\mathbb{Z}_{\geq 0}$,
\begin{equation}\label{asympmoments}
\lim_{n\rightarrow\infty}\frac{1}{n}\mathbb{E}\left(\sum_{k=1}^{n} \lambda_{k,n}^{s}\right)=\mathbb{E}([W]_{s+1}).
\end{equation}
There exists $R>0$ such that for all $|z|>R$, 
\begin{equation}\label{asymppointwise}
\lim_{n\rightarrow\infty}\frac{1}{n}\mathbb{E}\left(\frac{Q_{n}'(z)}{Q_{n}(z)}\right)=\mathbb{E}(W(z)).
\end{equation}
\end{theorem}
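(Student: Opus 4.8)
The plan is to prove the moment formula \eqref{asympmoments} first and then obtain the pointwise statement \eqref{asymppointwise} by summing a geometric series in $1/z$. Throughout I would use the deterministic bound $\|\mathcal{M}\|\le R:=(C+1)(p+2)$, valid surely by assumption $(a3)$ (the same estimate as for $\mathcal{H}$), so that all Neumann and resolvent series converge surely for $|z|>R$ and all eigenvalues satisfy $|\lambda_{k,n}|\le R$. This sure bound is what will make every interchange of limits, sums, and expectations below routine.

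For the moments I would write $\sum_{k=1}^{n}\lambda_{k,n}^{s}=\tr(M_{n}^{s})=\sum_{j\in I_{n}}M_{n}^{s}(j,j)$, where $I_{n}=\{-\lfloor(n-1)/2\rfloor,\dots,\lceil(n-1)/2\rceil\}$ has $n$ elements. The first ingredient is a localization property: since a nonzero entry $m_{a,b}$ forces $a-b\in\{-1,0,\dots,p\}$, every closed walk of length $s$ based at $j$ visits only indices in $[\,j-sp,\,j+s\,]$; hence $M_{n}^{s}(j,j)=\langle\mathcal{M}^{s}\hat{e}_{j},\hat{e}_{j}\rangle$ whenever $[\,j-sp,\,j+s\,]\subseteq I_{n}$, which excludes at most $2sp$ boundary indices. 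The second ingredient is stationarity: because the sequences $(a_{n}^{(k)})_{n}$ are i.i.d.\ by $(a1)$--$(a2)$, the shift $S\hat{e}_{n}=\hat{e}_{n+1}$ conjugates $\mathcal{M}$ into an operator built from shifted sequences, which has the same law as $\mathcal{M}$; consequently $\langle\mathcal{M}^{s}\hat{e}_{j},\hat{e}_{j}\rangle$ and $\langle\mathcal{M}^{s}\hat{e}_{0},\hat{e}_{0}\rangle$ are equidistributed, so
\[
\mathbb{E}\langle\mathcal{M}^{s}\hat{e}_{j},\hat{e}_{j}\rangle=\mathbb{E}\langle\mathcal{M}^{s}\hat{e}_{0},\hat{e}_{0}\rangle=\mathbb{E}([W]_{s+1}),
\]
the last equality being \eqref{eq:idwj} at $j=0$ together with $W=w_{0}$. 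Combining the two ingredients and using $|M_{n}^{s}(j,j)|\le R^{s}$ surely gives
\[
\frac{1}{n}\,\mathbb{E}\Big(\sum_{k=1}^{n}\lambda_{k,n}^{s}\Big)=\frac{\#\{\text{interior }j\}}{n}\,\mathbb{E}([W]_{s+1})+\frac{1}{n}\sum_{\text{boundary }j}\mathbb{E}\big(M_{n}^{s}(j,j)\big),
\]
and as $n\to\infty$ the first coefficient tends to $1$ while the second term is $O(sp\,R^{s}/n)\to 0$, establishing \eqref{asympmoments}.

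For the pointwise statement I would fix $|z|>R$ and expand the resolvent trace, writing $T_{n}^{(s)}:=\tr(M_{n}^{s})/n$:
\[
\frac{1}{n}\frac{Q_{n}'(z)}{Q_{n}(z)}=\frac{1}{n}\tr\big((zI_{n}-M_{n})^{-1}\big)=\sum_{s=0}^{\infty}\frac{1}{z^{s+1}}\,T_{n}^{(s)}.
\]
Since $|T_{n}^{(s)}|\le R^{s}$ surely, each summand is dominated by the summable, $n$-independent bound $R^{s}/|z|^{s+1}$. Taking expectations, using this domination to interchange $\mathbb{E}$ with the sum and then $\lim_{n}$ with the sum (dominated convergence over $s$), and applying \eqref{asympmoments} term by term yields
\[
\lim_{n\to\infty}\frac{1}{n}\,\mathbb{E}\Big(\frac{Q_{n}'(z)}{Q_{n}(z)}\Big)=\sum_{s=0}^{\infty}\frac{\mathbb{E}([W]_{s+1})}{z^{s+1}}=\mathbb{E}\Big(\sum_{s=0}^{\infty}\frac{[W]_{s+1}}{z^{s+1}}\Big)=\mathbb{E}(W(z)),
\]
where the final two equalities use $|[W]_{s+1}|\le R^{s}$ (so $\mathbb{E}$ and the sum commute) and the fact that $\sum_{s}[W]_{s+1}z^{-s-1}$ is exactly the Laurent expansion of $W$ at infinity, convergent for $|z|>R$.

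The conceptual heart, and the step I expect to need the most care, is the localization-plus-stationarity pairing for the moments: one must check that the walk count genuinely confines $M_{n}^{s}(j,j)$ to the bulk so that it equals the bi-infinite quantity $\langle\mathcal{M}^{s}\hat{e}_{j},\hat{e}_{j}\rangle$ exactly, and that the i.i.d.\ hypothesis truly yields shift-invariance of the law of $\mathcal{M}$. Everything else is bookkeeping—the boundary count is $O(1)$ for fixed $s$, and the sure bound $R$ from $(a3)$ makes all the interchanges elementary. As an alternative route to \eqref{asymppointwise}, one could instead compare $(zI_{n}-M_{n})^{-1}(j,j)$ with $w_{j}(z)$ directly via the order-of-approximation estimates \eqref{eq:estinf:3}--\eqref{eq:estinf:4}, but summing the moments is cleaner and reuses \eqref{asympmoments}.
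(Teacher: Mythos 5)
Your proof is correct, but it identifies the bulk diagonal entries differently from the paper, and the difference is worth noting. The paper fixes $\epsilon>0$, splits the trace into the indices $|j|\leq(1-\epsilon)r$ and an $O(\epsilon n)$ boundary set, and in the bulk invokes the Hermite--Pad\'{e} order-of-contact estimates \eqref{eq:estinf:3}--\eqref{eq:estinf:4} to show that $\big[Q_{r-j}^{+}Q_{r+j}^{-}/Q_{n}\big]_{s+1}=[w_{j}]_{s+1}$ once $r-|j|$ is large compared to $s$; stationarity is then applied to $[w_{j}]_{s+1}$, and the proof ends with $n\to\infty$ followed by $\epsilon\to 0$. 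You instead observe that the banded structure makes $M_{n}^{s}(j,j)=\langle\mathcal{M}^{s}\hat{e}_{j},\hat{e}_{j}\rangle$ \emph{exactly} for all but $O(sp)$ indices $j$ (an exceptional set of size $O(1)$ in $n$, so no $\epsilon$ is needed), and you apply shift-stationarity directly to $\langle\mathcal{M}^{s}\hat{e}_{j},\hat{e}_{j}\rangle$, reducing everything to the single identity $\langle\mathcal{M}^{s}\hat{e}_{0},\hat{e}_{0}\rangle=[W]_{s+1}$, which is \eqref{eq:idwj} at $j=0$. This is cleaner for the moment statement and avoids the double limit; the trade-off is that it leans entirely on Lemma \eqref{eq:idwj}, whose own proof in the paper runs through the same approximation estimates, so the Hermite--Pad\'{e} machinery is repackaged rather than avoided, and your route produces only the expected moments, whereas the paper's per-$j$ comparison of $Q_{r-j}^{+}Q_{r+j}^{-}/Q_{n}$ with $w_{j}$ is the form reused elsewhere. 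Your deduction of \eqref{asymppointwise} from \eqref{asympmoments} via the sure bounds $|T_{n}^{(s)}|\leq R^{s}$, $|[W]_{s+1}|\leq R^{s}$ and dominated convergence in $s$ is exactly the ``simple argument'' the paper alludes to, and all the interchanges you perform are justified by the sure bound from $(a3)$.
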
 
\begin{proof}
Note that $M_{n}$ is $n\times n$. Let $Q_{\ell}^{\pm}(z)$ be defined as in \eqref{def:charpol}. We have
\[
\frac{Q_{n}'(z)}{Q_{n}(z)}=\sum_{k=1}^{n}\frac{1}{z-\lambda_{k,n}}=\sum_{s=0}^{\infty}\left(\sum_{k=1}^{n}\lambda_{k,n}^{s}\right)\frac{1}{z^{s+1}},
\]
so
\[
\frac{1}{n}\left[\frac{Q_{n}'}{Q_{n}}\right]_{s+1}=\frac{1}{n}\sum_{k=1}^{n}\lambda_{k,n}^{s}=\frac{1}{n}\tr(M_{n}^{s}).
\]

Fix $0<\epsilon<1$. To simplify the notation, assume for the 
moment that $n$ is odd, say $n=2r+1$. We write 
\[
\tr(M_{n}^{s})=\sum_{j=-r}^{r}M_{n}^{s}(j,j)=\sum_{|j|\leq (1-\epsilon)r}M_{n}^{s}(j,j)+\sum_{(1-\epsilon)r<|j|\leq r}M_{n}^{s}(j,j).
\]
For each fixed $-r\leq j\leq r$, the entry $M_{n}^{s}(j,j)$ is the sum of all products $m_{i_{1},i_{2}}m_{i_{2},i_{3}}\cdots m_{i_{s},i_{s+1}}$ with $i_{1}=i_{s+1}=j$. Because of the banded structure of the matrix $M_{n}$, the number of such non-zero products remains bounded as $n$ increases (a simple bound is $(p+2)^{s}$), and so it follows from $(a3)$ that     
\begin{equation}\label{eq:estBns}
\frac{1}{n}\sum_{(1-\epsilon)r<|j|\leq r}|M_{n}^{s}(j,j)|\leq C_{p,s}\epsilon,
\end{equation}
where $C_{p,s}$ is a constant that depends only on $p$, $s$, and $C$ in $(a3)$.

In virtue of \eqref{eq:traceresol:1}, we also have, for every $-r\leq j\leq r$,
\[
\frac{Q_{r-j}^{+}(z) Q_{r+j}^{-}(z)}{Q_{n}(z)}=(zI_{n}-M_{n})^{-1}(j,j)=\sum_{s=0}^{\infty}\frac{M_{n}^{s}(j,j)}{z^{s+1}}
\]
so 
\begin{equation}\label{eq:Bnsc}
M_{n}^{s}(j,j)=\left[\frac{Q_{r-j}^{+} Q_{r+j}^{-}}{Q_{n}}\right]_{s+1}.
\end{equation}
Let $w_{j}(z)$ be the function defined in \eqref{def:funcwj}. According to \eqref{eq:estinf:3}, we have, for every $-r\leq j\leq r$,
\[
\frac{Q_{r-j}^{+}(z) Q_{r+j}^{-}(z)}{Q_{n}(z)}-w_{j}(z)=O\left(\frac{1}{z^{r-|j|+3+\lfloor(r-|j|)/p\rfloor}}\right).
\]
If $|j|\leq (1-\epsilon)r$, then $r-|j|\geq \epsilon r$, hence for all $n=2r+1$ large enough, we have
\begin{equation}\label{eq:error}
\frac{1}{n}\sum_{|j|\leq (1-\epsilon)r}\left[\frac{Q_{r-j}^{+} Q_{r+j}^{-}}{Q_{n}}-w_{j}\right]_{s+1}=0.
\end{equation}
So from \eqref{eq:Bnsc} and \eqref{eq:error} we deduce
\begin{equation}\label{eq:red}
\frac{1}{n} \sum_{|j|\leq (1-\epsilon)r}M_{n}^{s}(j,j)
=\frac{1}{n}\sum_{|j|\leq (1-\epsilon)r}\left[\frac{Q_{r-j}^{+} Q_{r+j}^{-}}{Q_{n}}\right]_{s+1}
=\frac{1}{n}\sum_{|j|\leq (1-\epsilon)r}[w_{j}]_{s+1}
\end{equation}
for all $n$ large enough. It follows easily from $(a1)$--$(a3)$ that each $[w_{j}]_{s+1}$ is integrable, and $\mathbb{E}([w_{j}]_{s+1})=\mathbb{E}([W]_{s+1})$ for every $j$. So we conclude, applying \eqref{eq:estBns} and \eqref{eq:red}, that
\begin{align*}
\frac{1}{n}\mathbb{E}\left(\tr{(M_{n}^{s})}\right) & =\frac{1}{n}\mathbb{E}\Big(\sum_{|j|\leq (1-\epsilon)r}M_{n}^{s}(j,j)\Big)+\frac{1}{n}\mathbb{E}\Big(\sum_{(1-\epsilon)r<|j|\leq r}M_{n}^{s}(j,j)\Big)\\
& =\frac{1}{n}(1+2\lfloor(1-\epsilon)r\rfloor)\,\mathbb{E}([W]_{s+1})+O(\epsilon).
\end{align*}
Letting $n=2r+1\rightarrow\infty$ first and then $\epsilon\rightarrow 0$, we get \eqref{asympmoments}. The case $n$ even is handled in identical manner, using \eqref{eq:estinf:4}.

By $(a3)$, there exists $R>0$ such that $\|M_{n}\|, \|\mathcal{M}\|\leq R$ for all $n$. Then, for $|z|>R$, a simple argument shows that \eqref{asymppointwise} follows from \eqref{asympmoments}.
\end{proof}

\begin{remark}
By the i.i.d. assumptions, it is clear that in Theorem~\ref{theo:rtso} we could have considered other sequences of matrices $M_{n}$ with the same asymptotic properties. For example, we could have taken the matrices $M_{n}=(m_{i,j})_{1\leq i,j\leq n}$. With this choice of the matrices $M_{n}$, Theorem~\ref{theo:main:intro} is immediately justified. Indeed, if the have the collections $\mathcal{A}=(a^{(0)},\ldots,a^{(p)})$, $\mathcal{B}=(b^{(0)},\ldots,b^{(p)})$, and $\alpha=(\alpha_{j}^{(k)})_{0\leq j\leq k\leq p}$ as in the statement of Theorem~\ref{theo:main:intro}, we can extend the sequences $a^{(k)}=(a_{n}^{(k)})_{n=1}^{\infty}$ to sequences $(a_{n}^{(k)})_{n\in\mathbb{Z}}$ defining for $n\leq 0$ the terms
\[
a_{n}^{(k)}=\begin{cases}
\alpha_{-n}^{(k)} & \mbox{if}\,\,-k\leq n\leq 0,\\[0.4em]
b_{-n-k}^{(k)} & \mbox{if}\,\,\,\, n\leq -k-1.
\end{cases}
\] 
Then the function \eqref{resolvW} coincides with \eqref{funcW:intro}, and Theorem~\ref{theo:rtso} applied to the sequence of matrices $M_{n}=(m_{i,j})_{1\leq i,j\leq n}$ implies Theorem~\ref{theo:main:intro}.
\end{remark}

In the rest of this section, we consider the probability distribution of the random vector $(\phi_{0,1}^{+}(z),\ldots,\phi_{0,p}^{+}(z))$, and describe certain relations between this distribution, the function $\mathbb{E}(W(z))$, and the measures $\mu_{k}$, $0\leq k\leq p$. The relations we describe extend some results obtained in \cite{LopPro} in the case $p=1$. 

\begin{definition}
Let $\phi_{0,k}^{+}$, $1\leq k\leq p$, be the resolvent functions defined in \eqref{def:resolfunc}, associated with the operator $\mathcal{H}_{0}^{+}$. Let $\sigma_{z}$ denote the joint probability distribution of the vector $(\phi_{0,1}^{+}(z),\ldots,\phi_{0,p}^{+}(z))$. Given $\mathbf{n}=(n_{1},\ldots,n_{p})\in\mathbb{Z}_{\geq 0}^{p}$, let
\begin{equation}\label{def:gfunc}
g_{\mathbf{n}}(z):=\mathbb{E}\Bigg(\prod_{k=1}^{p}\phi_{0,k}^{+}(z)^{n_{k}}\Bigg).
\end{equation}
\end{definition}

\begin{definition}\label{def:mul}
For $\mu_{\ell}$ the common distribution of the random variables $(a_{n}^{(\ell)})_{n\in\mathbb{Z}}$, we define the moments
\begin{equation}\label{eq:moments}
m_{k}^{(\ell)}:=\int z^{k}\,d\mu_{\ell}(z),\qquad k\geq 0.
\end{equation}
\end{definition}

\begin{theorem}
The following identity holds:
\begin{equation}\label{eq:EW}
\mathbb{E}(W(z))=\sum_{r=0}^{\infty}
\sum_{\mathbf{k}\in \widehat{C}(r)}\frac{1}{z^{r+1}}\binom{r}{\mathbf{k}}\Big(\prod_{0\leq s\leq \ell\leq p} m_{\mathbf{k}(\frac{\ell(\ell+1)}{2}+1+s)}^{(\ell)}\Big)\,g_{\alpha(\mathbf{k})}(z)\,g_{\beta(\mathbf{k})}(z),
\end{equation}
where $\alpha(\mathbf{k})$ and $\beta(\mathbf{k})$ are defined in \eqref{def:alphabeta:2}, and $\mathbf{k}(i)$ is the $i$-th component of the vector $\mathbf{k}$.
\end{theorem}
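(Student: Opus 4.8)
The plan is to take the expectation of the explicit expansion \eqref{eq:w0} of $w_{0}=W$ term by term, exploiting a splitting of the underlying randomness into three mutually independent blocks. Recalling from \eqref{resolvW} that $W=w_{0}$, the preceding lemma gives
\[
W(z)=\sum_{r=0}^{\infty}\sum_{\mathbf{k}\in\widehat{C}(r)}\frac{1}{z^{r+1}}\binom{r}{\mathbf{k}}\prod_{0\leq s\leq \ell\leq p}(a_{-s}^{(\ell)})^{\mathbf{k}(\frac{\ell(\ell+1)}{2}+1+s)}\prod_{j=1}^{p}\Big\{\phi_{0,j}^{+}(z)^{\alpha(\mathbf{k},j)}\,\phi_{0,j}^{-}(z)^{\beta(\mathbf{k},j)}\Big\}.
\]
To move $\mathbb{E}$ inside the double sum, I would fix $R\geq\|\mathcal{M}\|$ and argue absolute convergence, uniform over the sample space, for $|z|$ large: by $(a3)$ the central coefficients are bounded by $C$, and for $|z|>R$ one has $|\phi_{0,j}^{\pm}(z)|\leq(|z|-R)^{-1}$, since $\phi_{0,j}^{\pm}$ is a resolvent matrix entry of the bounded operators $\mathcal{H}_{0}^{\pm}$ in \eqref{def:opHp}--\eqref{def:opHm}. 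The multinomial identity then dominates $\sum_{\mathbf{k}\in\widehat{C}(r)}\binom{r}{\mathbf{k}}|\cdots|$ by a constant to the power $r$, so the whole series is dominated by a convergent geometric series, and dominated convergence justifies interchanging $\mathbb{E}$ with $\sum_{r}\sum_{\mathbf{k}}$.

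The key step is the factorization of $\mathbb{E}$ of a single term. I would identify three groups of entries: (i) the \emph{central} coefficients $\{a_{-s}^{(\ell)}:0\leq s\leq \ell\leq p\}$; (ii) the entries $\{a_{m}^{(k)}:m\geq 1\}$ defining $\mathcal{H}_{0}^{+}$, of which the $\phi_{0,j}^{+}$ are bounded measurable functions; and (iii) the entries defining $\mathcal{H}_{0}^{-}$, of which the $\phi_{0,j}^{-}$ are bounded measurable functions. From \eqref{def:opHm} the operator $\mathcal{H}_{0}^{-}$ uses exactly the variables $a_{m}^{(k)}$ with $m\leq -(k+1)$, while a central coefficient $a_{-s}^{(\ell)}$ has $-s\geq -\ell > -(\ell+1)$ and $-s\leq 0$, so it enters neither $\mathcal{H}_{0}^{-}$ nor $\mathcal{H}_{0}^{+}$. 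Hence the three index sets are pairwise disjoint, and by $(a2)$ the three blocks are jointly independent, so $\mathbb{E}$ of the product splits as a product of three expectations, one over each block.

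Finally I would evaluate the three factors. The central factor is $\prod_{0\leq s\leq \ell\leq p}\mathbb{E}\big((a_{-s}^{(\ell)})^{\mathbf{k}(\cdots)}\big)=\prod_{0\leq s\leq \ell\leq p} m_{\mathbf{k}(\frac{\ell(\ell+1)}{2}+1+s)}^{(\ell)}$ by $(a1)$ and \eqref{eq:moments}. The $\phi^{+}$ factor is $g_{\alpha(\mathbf{k})}(z)$ directly from \eqref{def:gfunc} and \eqref{def:alphabeta:2}. For the $\phi^{-}$ factor I would observe that the $k$-th subdiagonal of $\mathcal{H}_{0}^{-}$, namely $(a_{-1-k}^{(k)},a_{-2-k}^{(k)},\ldots)$, is i.i.d.\ with law $\mu_{k}$ and independent across $k$ by $(a1)$--$(a2)$, exactly as for $\mathcal{H}_{0}^{+}$; hence $\mathcal{H}_{0}^{-}\stackrel{d}{=}\mathcal{H}_{0}^{+}$, so $(\phi_{0,j}^{-})_{j}\stackrel{d}{=}(\phi_{0,j}^{+})_{j}$ and $\mathbb{E}\big(\prod_{j}\phi_{0,j}^{-}(z)^{\beta(\mathbf{k},j)}\big)=g_{\beta(\mathbf{k})}(z)$. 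Substituting the three factors into the interchanged series yields \eqref{eq:EW}. The main obstacle is the structural bookkeeping in the second step: one must verify precisely that the central coefficients never enter $\mathcal{H}_{0}^{\pm}$, so that the three blocks are genuinely independent; the distributional identity $\mathcal{H}_{0}^{-}\stackrel{d}{=}\mathcal{H}_{0}^{+}$, needed to rewrite the $\phi^{-}$ factor through $g$, is the other point requiring care.
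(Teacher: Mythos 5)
Your proposal is correct and follows essentially the same route as the paper: take expectation term by term in \eqref{eq:w0} and use the independence of the central coefficients $a_{-s}^{(\ell)}$ from the $\phi_{0,j}^{\pm}$, together with the equidistribution and independence of $(\phi_{0,1}^{+},\ldots,\phi_{0,p}^{+})$ and $(\phi_{0,1}^{-},\ldots,\phi_{0,p}^{-})$, to factor each term into the moment product times $g_{\alpha(\mathbf{k})}\,g_{\beta(\mathbf{k})}$. You simply supply more detail than the paper does, namely the dominated-convergence justification for interchanging $\mathbb{E}$ with the double sum and the explicit verification that the three index blocks are disjoint.
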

\begin{proof}
The identity follows by taking expectation in \eqref{eq:w0}. It follows from assumptions (a1) and (a2) that the variables $a_{-s}^{(\ell)}$ in \eqref{eq:w0} are independent of the functions $\phi_{0,j}^{+}$, $\phi_{0,j}^{-}$, and the vectors $(\phi_{0,1}^{+},\ldots,\phi_{0,p}^{+})$ and $(\phi_{0,1}^{-},\ldots,\phi_{0,p}^{-})$ are independent and have the same distribution. Thus, by \eqref{def:gfunc} and \eqref{eq:moments}, we obtain
\begin{gather*}
\mathbb{E}\left(\prod_{0\leq s\leq \ell\leq p} (a^{(\ell)}_{-s})^{\mathbf{k}(\frac{\ell(\ell+1)}{2}+1+s)}\prod_{j=1}^{p}\left\{\phi_{0,j}^{+}(z)^{\alpha(\mathbf{k},j)}\,\phi_{0,j}^{-}(z)^{\beta(\mathbf{k},j)}\right\}\right)\\
=\Big(\prod_{0\leq s\leq \ell\leq p} m_{\mathbf{k}(\frac{\ell(\ell+1)}{2}+1+s)}^{(\ell)}\Big)\,g_{\alpha(\mathbf{k})}(z)\,g_{\beta(\mathbf{k})}(z),
\end{gather*}
which yields \eqref{eq:EW}.
\end{proof}

Given vectors $\mathbf{r}=(r_{1},\ldots,r_{p})\in\mathbb{Z}_{\geq 0}^{p}$ and $\mathbf{k}=(k_{0},k_{1},\ldots,k_{p})\in\mathbb{Z}_{\geq 0}^{p+1}$, we define
\begin{equation}\label{def:etavec}
\eta(\mathbf{r},\mathbf{k}):=(k_{1}+r_{2}, k_{2}+r_{3},\ldots,k_{p-1}+r_{p},k_{p})\in\mathbb{Z}_{\geq 0}^{p}.
\end{equation}

\begin{theorem}
For any vector $\mathbf{r}=(r_{1},\ldots,r_{p})\in\mathbb{Z}_{\geq 0}^{p}$, we have
\[
g_{\mathbf{r}}(z)=\sum_{n=0}^{\infty}\sum_{\mathbf{k}\in C(n)}\frac{1}{z^{n+|\mathbf{r}|}}\binom{n+|\mathbf{r}|-1}{|\mathbf{r}|-1}\binom{n}{\mathbf{k}}\Big(\prod_{\ell=0}^{p}m_{\mathbf{k}(\ell)}^{(\ell)}\Big) g_{\eta(\mathbf{r},\mathbf{k})}(z),
\]
where $|\mathbf{r}|=\sum_{j=1}^{p}r_{j}$, see also \eqref{def:Cn}.
\end{theorem}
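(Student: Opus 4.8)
The plan is to derive the recursion for $g_{\mathbf{r}}(z)$ by taking expectations in the Corollary's identity \eqref{eq:phiphi1}, after adapting it to the operator $\mathcal{H}_0^+$. The key observation is that the functions $\phi_{0,k}^+$ defined via \eqref{def:resolfunc} play exactly the role of the $\phi_j$ in Section~\ref{sec:HP}, because $\mathcal{H}_0^+$ has precisely the one-sided banded Hessenberg structure of \eqref{def:op} with diagonal sequences $(a_n^{(k)})_{n\geq 1}$. The operator obtained by deleting the first row and column of $\mathcal{H}_0^+$ is $\mathcal{H}_1^+$, whose resolvent functions $\phi_{1,k}^+$ are the analogues of the $\phi_{1,k}$ appearing in Lemma~\ref{lem:relfuncphi}. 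Thus the Corollary gives, with $\mathbf{r}=(r_1,\ldots,r_p)$, $r=|\mathbf{r}|$, and $r_{p+1}=0$,
\begin{equation}\label{eq:plan:phiphi}
\prod_{j=1}^{p}\phi_{0,j}^{+}(z)^{r_{j}}=\sum_{n=0}^{\infty}\sum_{\mathbf{k}\in C(n)}\frac{1}{z^{n+r}}\binom{n+r-1}{r-1}\binom{n}{\mathbf{k}}\Big(\prod_{j=0}^{p}(a_{1}^{(j)})^{\mathbf{k}(j)}\Big)\prod_{j=1}^{p}\phi_{1,j}^{+}(z)^{\mathbf{k}(j)+r_{j+1}}.
\end{equation}

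**Next I would** take expectation of both sides of \eqref{eq:plan:phiphi}. The left side becomes $g_{\mathbf{r}}(z)$ by definition \eqref{def:gfunc}. On the right, the crucial independence fact is that the random variables $a_1^{(0)},\ldots,a_1^{(p)}$ (the first-column entries of $\mathcal{H}_0^+$) are, by assumptions (a1)--(a2), jointly independent of the entire operator $\mathcal{H}_1^+$, since $\mathcal{H}_1^+$ is built only from the entries $a_n^{(k)}$ with $n\geq 2$. Therefore, for each fixed $n$ and $\mathbf{k}\in C(n)$, the expectation factors:
\begin{equation}\label{eq:plan:factor}
\mathbb{E}\Big(\prod_{j=0}^{p}(a_{1}^{(j)})^{\mathbf{k}(j)}\prod_{j=1}^{p}\phi_{1,j}^{+}(z)^{\mathbf{k}(j)+r_{j+1}}\Big)=\Big(\prod_{\ell=0}^{p}m_{\mathbf{k}(\ell)}^{(\ell)}\Big)\,\mathbb{E}\Big(\prod_{j=1}^{p}\phi_{1,j}^{+}(z)^{\mathbf{k}(j)+r_{j+1}}\Big),
\end{equation}
using that $\mathbb{E}((a_1^{(\ell)})^{\mathbf{k}(\ell)})=m_{\mathbf{k}(\ell)}^{(\ell)}$ from \eqref{eq:moments}. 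The remaining step is to recognize the exponent vector in the second factor. The power of $\phi_{1,j}^+$ is $\mathbf{k}(j)+r_{j+1}$ for $1\leq j\leq p-1$, and is $\mathbf{k}(p)$ for $j=p$ (since $r_{p+1}=0$); this is exactly the vector $\eta(\mathbf{r},\mathbf{k})=(k_1+r_2,\ldots,k_{p-1}+r_p,k_p)$ from \eqref{def:etavec}. Finally, since $(a_n^{(k)})_{n\geq 2}$ has the same joint distribution as $(a_n^{(k)})_{n\geq 1}$ by the i.i.d. assumption (a1), the shifted operator $\mathcal{H}_1^+$ has the same law as $\mathcal{H}_0^+$, so $\mathbb{E}(\prod_j \phi_{1,j}^+(z)^{\eta(\mathbf{r},\mathbf{k})(j)})=g_{\eta(\mathbf{r},\mathbf{k})}(z)$. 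Assembling \eqref{eq:plan:factor} into the expectation of \eqref{eq:plan:phiphi} yields the claimed formula.

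**The main obstacle** is justifying the term-by-term exchange of expectation and the infinite sum in \eqref{eq:plan:phiphi}, together with the absolute convergence needed to split the expectation as in \eqref{eq:plan:factor}. This is where assumption (a3) is essential: the sure bound $|a_n^{(k)}|\leq C$ forces $\|\mathcal{H}_0^+\|$ and $\|\mathcal{H}_1^+\|$ to be bounded by a deterministic constant (the estimate $\|\mathcal{H}\|\leq(C+1)(p+2)$ from Section~\ref{sec:HP} applies), so for $|z|$ larger than this bound the series \eqref{eq:plan:phiphi} converges absolutely and uniformly in the randomness, and the integrand is dominated by a constant. This legitimizes both Fubini's theorem and the interchange. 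I would also need to note that each factor $\phi_{1,j}^+(z)$ is bounded by a deterministic constant for such $z$, ensuring the products are integrable and that the factorization in \eqref{eq:plan:factor} respects the convention $g_{\mathbf{0}}\equiv 1$ and the binomial convention $\binom{n}{-1}=\delta_{n,-1}$ when $\mathbf{r}$ has a zero component. The combinatorial identification of $\eta(\mathbf{r},\mathbf{k})$ is routine bookkeeping once the exponents are tracked carefully through the index shift $j\mapsto j+1$.
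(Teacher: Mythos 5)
Your proposal is correct and follows essentially the same route as the paper: apply the identity \eqref{eq:phiphi1} to the operator $\mathcal{H}_{0}^{+}$, take expectations term by term, factor using the independence of $(a_{1}^{(0)},\ldots,a_{1}^{(p)})$ from $\mathcal{H}_{1}^{+}$, and identify the resulting exponent vector as $\eta(\mathbf{r},\mathbf{k})$ with $\mathcal{H}_{1}^{+}$ equidistributed with $\mathcal{H}_{0}^{+}$. The extra care you take in justifying the interchange of expectation and summation via the sure bound (a3) is a detail the paper leaves implicit, but it does not change the argument.
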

\begin{proof}
According to \eqref{eq:phiphi1},
\[
\prod_{j=1}^{p}\phi_{0,j}^{+}(z)^{r_{j}}=\sum_{n=0}^{\infty}\sum_{\mathbf{k}\in C(n)}\frac{1}{z^{n+|\mathbf{r}|}}
\binom{n+|\mathbf{r}|-1}{|\mathbf{r}|-1}\binom{n}{\mathbf{k}}\Big(\prod_{j=0}^{p} (a_{1}^{(j)})^{\mathbf{k}(j)}\Big)\prod_{j=1}^{p}\phi_{1,j}^{+}(z)^{\mathbf{k}(j)+r_{j+1}}
\]
where $r_{p+1}=0$. Taking expectation, we have
\[
\mathbb{E}\Big(\Big(\prod_{j=0}^{p} (a_{1}^{(j)})^{\mathbf{k}(j)}\Big)\prod_{j=1}^{p}\phi_{1,j}^{+}(z)^{\mathbf{k}(j)+r_{j+1}}\Big)=\Big(\prod_{\ell=0}^{p}m_{\mathbf{k}(\ell)}^{(\ell)}\Big) g_{\eta(\mathbf{r},\mathbf{k})}(z),
\]
so the result follows.
\end{proof}

\begin{definition}
Define the set 
\[
\mathcal{D}_{z}:=\{((t_{0},\ldots,t_{p}),(x_{1},\ldots,x_{p}))\in\mathbb{C}^{p+1}\times \mathbb{C}^{p}: z-t_{0}-\sum_{k=1}^{p}t_{k} x_{k}\neq 0\}.
\]
We will write $\mathbf{t}=(t_{0},\ldots,t_{p})$, $\mathbf{x}=(x_{1},\ldots,x_{p})$. Let $\lambda_{z}:\mathcal{D}_{z}\longrightarrow\mathbb{C}^{p}$ be the function
\[
\lambda_{z}(\mathbf{t},\mathbf{x}):=\left(\frac{1}{z-t_{0}-\sum_{k=1}^{p}t_{k} \,x_{k}},\frac{x_{1}}{z-t_{0}-\sum_{k=1}^{p}t_{k}\,x_{k}},\ldots,\frac{x_{p-1}}{z-t_{0}-\sum_{k=1}^{p}t_{k}\,x_{k}}\right).
\]
The product measure $\mu:=\mu_{0}\times\cdots\times\mu_{p}$ is the probability distribution of $(a_{1}^{(0)},\ldots,a_{1}^{(p)})$, in accordance with (a1). 
\end{definition}

We have the following \emph{invariance principle}.

\begin{theorem}
For any bounded measurable function or continuous function $f:\mathbb{C}^{p}\longrightarrow\mathbb{C}$, and for all $z$ large enough, we have
\begin{equation}\label{eq:invprop}
\int_{\mathbb{C}^{p}} f(\mathbf{x})\,d\sigma_{z}(\mathbf{x})
=\iint_{\mathcal{D}_{z}} f(\lambda_{z}(\mathbf{t},\mathbf{x}))\,d\mu(\mathbf{t})\,d\sigma_{z}(\mathbf{x}).
\end{equation}
\end{theorem}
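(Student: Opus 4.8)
The plan is to recognize that the invariance identity \eqref{eq:invprop} is simply the probabilistic shadow of the algebraic relations in Lemma~\ref{lem:relfuncphi}, read off at the level of distributions. The starting observation is that the operator $\mathcal{H}_{0}^{+}$ of \eqref{def:opHp} (with $r=0$) is \emph{exactly} the one-sided operator $\mathcal{H}$ of Section~\ref{sec:HP} built from the sequences $(a_{n}^{(k)})_{n\geq 1}$, while $\mathcal{H}_{1}^{+}$ coincides with the once-shifted operator $\mathcal{H}_{1}$. Consequently the functions $\phi_{0,j}^{+}$ are precisely the $\phi_{j}$ of \eqref{def:resolvphi} and the $\phi_{1,j}^{+}$ are the $\phi_{1,j}$ of \eqref{def:resolvphi1j}, so Lemma~\ref{lem:relfuncphi} applies verbatim to these functions.

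First I would rewrite \eqref{rel:phis:1}--\eqref{rel:phis:2} in vector form. Setting $\mathbf{t}:=(a_{1}^{(0)},\ldots,a_{1}^{(p)})$ and $\mathbf{x}:=(\phi_{1,1}^{+}(z),\ldots,\phi_{1,p}^{+}(z))$, Lemma~\ref{lem:relfuncphi} says exactly that
\[
(\phi_{0,1}^{+}(z),\ldots,\phi_{0,p}^{+}(z))=\lambda_{z}(\mathbf{t},\mathbf{x}),
\]
since the common denominator there is $z-t_{0}-\sum_{k=1}^{p}t_{k}x_{k}$ and the successive numerators are $1,x_{1},\ldots,x_{p-1}$, matching the definition of $\lambda_{z}$. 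Thus the random vector whose law is $\sigma_{z}$ is the image under $\lambda_{z}$ of the pair $(\mathbf{t},\mathbf{x})$.

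The probabilistic heart of the argument is then to identify the joint law of $(\mathbf{t},\mathbf{x})$. By (a2) the variables $(a_{1}^{(k)})_{0\leq k\leq p}$ constituting $\mathbf{t}$ are independent of all $(a_{n}^{(k)})_{n\geq 2}$, and $\mathbf{x}$ is a measurable function of the latter alone (being the resolvent vector of $\mathcal{H}_{1}^{+}$); hence $\mathbf{t}$ and $\mathbf{x}$ are independent. By (a1), $\mathbf{t}$ has law $\mu=\mu_{0}\times\cdots\times\mu_{p}$. Finally, because each sequence $(a_{n}^{(k)})_{n}$ is i.i.d., the shift $n\mapsto n+1$ is distribution-preserving, and $\mathcal{H}_{1}^{+}$ is built from the shifted sequences by the same rule as $\mathcal{H}_{0}^{+}$; therefore $\mathbf{x}$ has the same law $\sigma_{z}$ as $(\phi_{0,1}^{+},\ldots,\phi_{0,p}^{+})$. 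With these three facts the pushforward computation is immediate:
\[
\int f\,d\sigma_{z}=\mathbb{E}\big[f(\lambda_{z}(\mathbf{t},\mathbf{x}))\big]=\iint f(\lambda_{z}(\mathbf{t},\mathbf{x}))\,d\mu(\mathbf{t})\,d\sigma_{z}(\mathbf{x}),
\]
the second equality being Fubini applied to the independent pair, which yields \eqref{eq:invprop}.

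The one genuinely technical point, and the step I expect to require the most care, is the restriction of the integration region to $\mathcal{D}_{z}$: I must verify that $(\mathbf{t},\mathbf{x})\in\mathcal{D}_{z}$ almost surely so that $\lambda_{z}$ is defined along the integral, and this is where the qualifier ``for all $z$ large enough'' enters. By (a3) there is $R>0$ with $\|\mathcal{H}_{0}^{+}\|,\|\mathcal{H}_{1}^{+}\|\leq R$ surely, so for $|z|>R$ all the resolvent functions are analytic and the relation $\phi_{0,1}^{+}(z)=1/(z-t_{0}-\sum_{k=1}^{p}t_{k}x_{k})$ holds; since $\phi_{0,1}^{+}(z)=1/z+O(1/z^{2})$ is nonzero for $z$ large, the denominator $z-t_{0}-\sum_{k=1}^{p}t_{k}x_{k}=1/\phi_{0,1}^{+}(z)$ is finite and nonzero, i.e. $(\mathbf{t},\mathbf{x})\in\mathcal{D}_{z}$ surely. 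Everything else is a direct application of independence together with Lemma~\ref{lem:relfuncphi}, so I anticipate no further obstacle.
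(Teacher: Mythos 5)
Your argument is essentially identical to the paper's proof: both identify $(\phi_{0,1}^{+}(z),\ldots,\phi_{0,p}^{+}(z))$ as $\lambda_{z}$ applied to the pair $((a_{1}^{(0)},\ldots,a_{1}^{(p)}),\Phi_{1}(z))$ via Lemma~\ref{lem:relfuncphi}, use independence and shift-invariance to see that this pair has law $\mu\times\sigma_{z}$ supported in $\mathcal{D}_{z}$ for $|z|$ large, and conclude $\sigma_{z}=(\lambda_{z})_{*}(\mu\times\sigma_{z})$ by the change-of-variables formula. The only point you leave implicit is why the identity extends from bounded measurable $f$ to arbitrary continuous $f$ (the paper notes that $\sigma_{z}$ has bounded support, so continuous functions are $\sigma_{z}$-integrable); otherwise the proofs coincide.
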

\begin{proof}
Let $\Phi(z):=(\phi_{0,1}^{+}(z),\ldots,\phi_{0,p}^{+}(z))$ and $\Phi_{1}(z):=(\phi_{1,1}^{+}(z),\ldots,\phi_{1,p}^{+}(z))$. By (a3) and Lemma~\ref{lem:relfuncphi}, we know there exists $R>0$ such that for all $|z|>R$, we have $z-a_{1}^{(0)}-\sum_{k=1}^{p} a_{1}^{(k)} \phi_{1,k}^{+}(z)\neq 0$, so $((a_{1}^{(0)},\ldots,a_{1}^{(p)}),\Phi_{1}(z))\in\mathcal{D}_{z}$, surely. It is clear that the vectors $\Phi(z)$ and $\Phi_{1}(z)$ have the same distribution $\sigma_{z}$, and $(a_{1}^{(0)},\ldots,a_{1}^{(p)})$ and $\Phi_{1}(z)$ are independent (see definition of $\mathcal{H}_{1}^{+}$), so the distribution of $((a_{1}^{(0)},\ldots,a_{1}^{(p)}),\Phi_{1}(z))$ is $\mu\times \sigma_{z}$, and this is a measure on $\mathcal{D}_{z}$. 

The relations \eqref{rel:phis:1}--\eqref{rel:phis:2} imply $\Phi(z)=\lambda_{z}((a_{1}^{(0)},\ldots,a_{1}^{(p)}),\Phi_{1}(z))$, so $\sigma_{z}=(\lambda_{z})_{*}(\mu\times\sigma_{z})$ is the push-forward measure of $\mu\times\sigma_{z}$ under $\lambda_{z}$. By the change of variable formula, for any bounded measurable function $f$ on $\mathbb{C}^{p}$, we have
\[
\int f\,d\sigma_z=\iint f\circ\lambda_{z}\,d(\mu\times\sigma_{z}).
\]
Since $\Phi(z)$ is bounded, the measure $\sigma_{z}$ has bounded support, so any continuous function $f$ is in $L^{1}(\sigma_{z})$, and \eqref{eq:invprop} is also valid for such functions.
\end{proof}

\bigskip

\noindent \textsc{Department of Mathematics, University of Central Florida, 4393 Andromeda Loop North, Orlando, FL 32816} \\
\textit{Email address}: \texttt{abey.lopez-garcia\symbol{'100}ucf.edu}

\bigskip

\noindent \textsc{Department of Mathematics and Statistics, University of South Alabama, 411 University Boulevard North, 
Mobile, AL 36688}\\
\textit{Email address}: \texttt{prokhoro\symbol{'100}southalabama.edu}

\end{document}